\newif\ifproofs\proofsfalse\ifproofs\RequirePackage[displaymath,mathlines]{lineno}\fi
\newif\ifsubsections
    \definecolor{linkred}{rgb}{0.7,0.2,0.2}
    \definecolor{linkblue}{rgb}{0,0.2,0.6}
    \definecolor{linkred}{cmyk}{0.0,0.0,0.0,1.0}
    \definecolor{linkblue}{cmyk}{0,0.0,0.0,1.0}
\tikzset{stretch/.initial=1}
\theoremstyle{plain}
\newtheorem{theorem}[equation]{Theorem}
\theoremstyle{definition}
\newtheorem{definition}[equation]{Definition}
\theoremstyle{remark}
\newtheorem{remark}[equation]{Remark}
\newtheorem{example}[equation]{Example}
\newtheorem{exercise}[equation]{Exercise}
\newtheorem*{ack}{Acknowledgements}
\newcommand\drawloop[4][]%
\def\F{\mathbb{F}}
\def\Q{\mathbb{Q}}
\def\R{\mathbb{R}}
\def\Z{\mathbb{Z}}
\def\bbT{\mathbb{T}}
\def\bbD{\mathbb{D}}
\def\bbC{\mathbb{C}}
\def\cH{\mathcal{H}}
\def\cM{\mathcal M}
\def\cMhat{\widehat{\mathcal M}}
\def\Sym{\mathrm{Sym}}
\def\CFKR{\CFK_{\F[U, V]}}
\def \gr {\operatorname{gr}}
\def\Cone{\operatorname{Cone}}
\def\d{\partial}
\def\co{\colon}
\def\spinc{\textrm{Spin}^c}
\def\mfs{\mathfrak{s}}
\def\mft{\mathfrak{t}}
\def\id{\textup{id}}
\def\Cone{\operatorname{Cone}}
\def\CF {\mathit{CF}}
\def\HF {\mathit{HF}}
\def\HFK {\mathit{HFK}}
\newcommand\HFKhat{\widehat{\HFK}}
\newcommand\HFKm{\HFK^-}
\newcommand\HFhat{\widehat{\HF}}
\newcommand\CFhat{\widehat{\CF}}
\newcommand\HFp {\HF^+}
\newcommand \CFp {\CF^+}
\newcommand \CFm {\CF^-}
\newcommand \HFm {\HF^-}
\newcommand \CFinf {\CF^{\infty}}
\newcommand \HFinf {\HF^{\infty}}
\def\HFred{\HF^{\operatorname{red}}}
\def\CFK{\mathit{CFK}}
\def\spinc {{\operatorname{spin^c}}}
\def\HFred{\HF_{\operatorname{red}}}
\def\Hom{\mathrm{Hom}}
\def\bfalpha{\boldsymbol{\alpha}}
\def\bfbeta{\boldsymbol{\beta}}
\def\bfx{\mathbf{x}}
\def\bfy{\mathbf{y}}
\newcommand{\lab}[1]{$\scriptstyle #1$}
\title[Heegaard Floer homology]{Lecture notes on Heegaard Floer homology}
\author{Jennifer Hom}
\thanks{The author was partially supported by NSF grant DMS-1552285 and a Sloan Research Fellowship.}
\address {School of Mathematics, Georgia Institute of Technology, Atlanta, GA 30332}
\email{hom@math.gatech.edu}
\begin{document} 
	
\begin{abstract}
These are the lecture notes for a course on Heegaard Floer homology held at PCMI in Summer 2019. We describe Heegaard diagrams, Heegaard Floer homology, knot Floer homology, and the relationship between the knot and 3-manifold invariants. 
\end{abstract} 

%
%
\maketitle
\thispagestyle{empty}

%
%


\section*{Introduction}

Heegaard Floer homology, defined by Ozsv\'ath-Szab\'o \cite{OS3manifolds1}, and knot Floer homology, defined by Ozsv\'ath-Szab\'o \cite{OSknots} and independently Rasmussen \cite{RasmussenThesis}, are invariants of 3-manifolds and knots inside of them. These notes aim to provide an overview of these invariants and the relationship between them.

We will describe our manifolds and knots via Heegaard diagrams, which we define in Section \ref{sec:Heegaard}. Any two Heegaard diagrams for the same manifold or knot are related by a sequence of moves, called Heegaard moves, much like any two projections for a knot are related by a sequence of Reidemeister moves.

From such diagrams, we will construct chain complexes for 3-manifolds (Section \ref{sec:HF}) and  knots (Section \ref{sec:HFK}) whose chain homotopy type (and in particular, homology) is independent of the choice of Heegaard diagram. The proof that these invariants do not depend on the choice of Heegaard diagram relies on showing invariance under Heegaard moves.

From the knot invariant associated to a knot $K$ in $S^3$, one can compute the 3-manifold invariant for any Dehn surgery along $K$; we discuss this relationship in the case of integral surgery in Section \ref{sec:surgery}. 


\begin{ack}
I would like to thank the organizers of the 2019 Park City Mathematics Institute for the opportunity to give the lecture series associated with these notes, and the participants of PCMI for being an attentive audience. I would also like to thank Miriam Kuzbary and Robert Lipshitz for helpful comments on earlier versions of these notes.
\end{ack}


\section{Heegaard splittings and diagrams}\label{sec:Heegaard}
\subsection{Heegaard splittings}
Our goal is to define an invariant of closed 3-manifolds and knots inside of them. In order to do this, we will need a way to describe our manifolds and knots. We will do this using Heegaard diagrams. Throughout, we will assume that all of our 3-manifolds are closed and oriented, and that all of our homeomorphisms are orientation preserving, unless stated otherwise. Much of what follows comes from \cite[Lecture 1]{SavelievLectures} and \cite[Sections 2 and 3]{OSintro}.

\begin{definition}
A \emph{handlebody of genus $g$} is a closed regular neighborhood of a wedge of $g$ circles in $\R^3$.
\end{definition}

\begin{definition}\label{defn:Heegaardsplitting}
Let $Y$ be a 3-manifold. A \emph{Heegaard splitting} of $Y$ is a decomposition of $Y = H_1 \cup_f H_2$ where $H_1, H_2$ are handlebodies and $f$ is an orientation reversing homeomorphism from $\d H_1$ to $\d H_2$. The \emph{genus} of the Heegaard splitting is the genus of the surface $\d H_1$ or equivalently $\d H_2$.
\end{definition}

\begin{example}
Note that $S^3 = B^3 \cup B^3$. This is a genus 0 Heegaard splitting of $S^3$.
\end{example}

\begin{example}
Consider $S^3$ as the 1-point compactification of $\R^3$. Consider the circle consisting of the $z$-axis and the point at infinity. A regular neighborhood of this union is a handlebody $H_1$ of genus 1. The complement of $H_1$ is also a handlebody of genus 1. Together, these two handlebodies form a genus 1 Heegaard splitting of $S^3$.
\end{example}

\begin{theorem}
Any closed, orientable 3-manifold $Y$ admits a Heegaard splitting.
\end{theorem}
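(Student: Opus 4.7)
The plan is to use Morse theory. Since every closed 3-manifold admits a smooth structure (by Moise's theorem), I can equip $Y$ with one and then choose a smooth Morse function $f\colon Y \to \R$.

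First, I would arrange $f$ to be a \emph{self-indexing} Morse function with range $[0,3]$, so that every critical point of index $k$ has critical value $k$. Standard Morse theory (rearrangement of critical values by a gradient-like vector field) guarantees such an $f$ exists. Next, I would use the connectedness of $Y$ together with handle-cancellation lemmas to modify $f$ so that it has a unique index $0$ critical point and a unique index $3$ critical point: any two index $0$ critical points can be canceled against an index $1$ critical point joining them via a gradient flow line, and dually for index $3$ against index $2$.

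With $f$ so arranged, set
\[
H_1 = f^{-1}\bigl([0, 3/2]\bigr), \qquad H_2 = f^{-1}\bigl([3/2, 3]\bigr),
\]
and let $\Sigma = f^{-1}(3/2)$, which is a closed orientable surface since $3/2$ is a regular value. Then $Y = H_1 \cup_f H_2$, where $f$ is the identity on $\Sigma = \partial H_1 = \partial H_2$ (reversing the boundary orientation of one side). It remains to verify that $H_1$ and $H_2$ are handlebodies. For $H_1$: passing from $f^{-1}([0,\epsilon])$, a $3$-ball from the unique index $0$ critical point, through the index $1$ critical values builds $H_1$ by attaching $1$-handles to a $3$-ball, which by definition produces a handlebody of genus equal to the number of index $1$ critical points. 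The same argument applied to $-f$ (whose index $k$ critical points are the index $3-k$ critical points of $f$) shows $H_2$ is a handlebody.

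The main obstacle is the reduction to a unique index $0$ and unique index $3$ critical point, since without this $H_1$ and $H_2$ would only be disjoint unions of handlebodies rather than handlebodies. This step genuinely uses connectedness of $Y$ and the first cancellation lemma in Morse theory; everything else is routine once self-indexing and this normalization are in place.
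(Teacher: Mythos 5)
Your proof is correct, but it takes a different route from the one in the paper. The paper's argument is combinatorial: it takes a triangulation of $Y$, lets $H_1$ be a regular neighborhood of the $1$-skeleton (a neighborhood of a graph, hence a handlebody) and observes that the complement $H_2$ is a regular neighborhood of the dual $1$-skeleton, hence also a handlebody. Your Morse-theoretic argument is the alternative the paper itself points to in the remark immediately following the theorem (via \cite[Section 3]{OSintro}). The trade-offs are roughly these: the triangulation proof is shorter and needs only the existence of triangulations plus the fact that a regular neighborhood of a graph is a handlebody, with no need for rearrangement or cancellation lemmas; your proof requires the self-indexing and cancellation machinery (and, as you correctly flag, connectedness of $Y$ to reduce to a single index $0$ and a single index $3$ critical point, without which the middle level would separate disjoint unions of handlebodies), but in exchange it exhibits the splitting surface as a level set and ties the genus directly to the number of index $1$ critical points, which meshes well with the handle-decomposition point of view used elsewhere in these notes. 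One tiny point of bookkeeping against the paper's conventions: the paper defines a handlebody as a closed regular neighborhood of a wedge of $g$ circles in $\R^3$, so your step ``a $3$-ball with $g$ $1$-handles attached is a handlebody of genus $g$'' is using the standard equivalence of that definition with the handle-attachment description, which is worth acknowledging but is not a gap.
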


\begin{proof}
Consider a triangulation of $Y$. Let $H_1$ be a closed regular neighborhood of the 1-skeleton of the triangulation; since $H_1$ is a neighborhood of a graph, it is a handlebody. The complement $H_2 = Y - H_1$ is also a handlebody; namely it is a neighborhood of the dual 1-skeleton, that is, the graph whose vertices are the centers of the tetrahedra and whose edges are segments perpendicular to the faces of the tetrahedra.
\end{proof}

\begin{remark}
Alternatively, a self-indexing Morse function on $Y$ gives rise to a Heegaard splitting of $Y$; see \cite[Section 3]{OSintro}, in particular, Exercise 3.5.
\end{remark}

\begin{definition}
Two Heegaard splittings $Y = H_1 \cup_f H_2$ and $Y = H'_1 \cup_{f'} H'_2$ of $Y$ are \emph{homeomorphic} if there exists a homeomorphism $\phi \co Y \to Y$ taking $H_i$ to $H'_i$. 
\end{definition}

\begin{remark}
One may also consider the following stricter notion of equivalence: two Heegaard splittings $Y = H_1 \cup_f H_2$ and $Y = H'_1 \cup_{f'} H'_2$ of $Y$ are \emph{isotopic} if there exists a map $\psi \co Y \times [0, 1] \to Y$ such that
\begin{enumerate}
	\item $\psi |_{Y \times \{0\}} = \id_Y$,
	\item $\psi |_{Y \times t}$ is a homeomorphism for all $t$,
	\item $\psi |_{Y \times \{1\}}$ sends $H_i$ to $H'_i$.
\end{enumerate}
Note that if $Y = H_1 \cup_f H_2$ and $Y = H'_1 \cup_{f'} H'_2$ are isotopic, then they are homeomorphic (via $\psi |_{Y \times \{1\}}$). The converse is false, since the homeomorphism $\phi \co Y \to Y$ need not be isotopic to the identity.
\end{remark}

\begin{definition}
Let $Y = H_1 \cup_f H_2$ be a genus $g$ Heegaard splitting of $Y$. A \emph{stablization} $H'_1 \cup_{f'} H'_2$ of $H_1 \cup_{f} H_2$ is the genus $g+1$ Heegaard splitting of $Y$ where $H'_1$ consists of $H_1$ together with a neighborhood $N$ of a properly embedded unknotted arc $\gamma$ in $H_2$, and $H'_2$ consists of $H_2 - N$.
\end{definition}

\begin{exercise}
Prove that the homeomorphism type (in fact, isotopy type) of a stabilization of $Y = H_1 \cup_f H_2$ is independent of the choice of $\gamma$.
\end{exercise}

The following theorem of Reidemeister and Singer highlights the importance of stabilizations.

\begin{theorem}[\cite{Reidemeister, Singer}]\label{thm:ReidSing}
Any two Heegaard splittings of $Y$ become isotopic after sufficiently many stablizations.
\end{theorem}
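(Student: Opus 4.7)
The plan is to translate the problem into Morse theory and apply Cerf's theorem, following the remark above. A self-indexing Morse function $\phi \co Y \to [0,3]$ with exactly one index $0$ critical point and one index $3$ critical point determines a genus $g$ Heegaard splitting, where $g$ is the number of index $1$ critical points: one takes $H_1 = \phi^{-1}([0, 3/2])$ and $H_2 = \phi^{-1}([3/2, 3])$. Conversely, every Heegaard splitting arises this way up to isotopy. So given two Heegaard splittings of $Y$, I would first choose corresponding self-indexing Morse functions $\phi_0$ and $\phi_1$ and connect them by a generic smooth path $\{\phi_t\}_{t \in [0,1]}$ of functions $Y \to \R$.

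By Cerf's theorem, such a generic path is a Morse function for all but finitely many values of $t$, and at each exceptional parameter exactly one of the following occurs: (i) a birth-death singularity, where a pair of critical points of consecutive indices is created or destroyed; or (ii) two critical values cross transversely, which after a generic perturbation of the gradient field can be interpreted as a handle slide between critical points of the same index. By a standard connectedness argument, the path may additionally be chosen so that it has a unique index $0$ and a unique index $3$ critical point throughout, so every birth-death involves an index $1$/index $2$ pair.

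Next I would interpret the effect of each singular moment on the associated Heegaard splitting. A handle slide between critical points of index $1$ (respectively, index $2$) changes an attaching curve of $H_1$ (respectively, $H_2$) but leaves the isotopy class of the splitting unchanged. A birth of an index $1$/index $2$ pair is precisely a stabilization in the sense of the definition above: the new index $1$ critical point attaches a trivial tube to $H_1$ along an unknotted arc in $H_2$, and the paired index $2$ critical point effects the removal of a neighborhood of this arc from $H_2$. A death is the inverse operation, a destabilization. Thus the path $\{\phi_t\}$ produces a finite sequence of Heegaard splittings starting at the first given splitting and ending at one isotopic to the second, with consecutive terms related by an isotopy, a stabilization, or a destabilization.

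The main obstacle is converting this mixed sequence into a \emph{common} stabilization of the two original splittings. I would argue that a destabilization in the sequence can always be commuted past a subsequent stabilization: by a general-position argument the destabilizing disk and the unknotted arc of the next stabilization may be isotoped to be disjoint, after which the two operations can be performed in either order without changing the resulting splitting up to isotopy. Iterating this shuffle pushes all destabilizations to the end of the sequence, producing a splitting $H^{\ast}$ that is obtained from the first splitting by a sequence of stabilizations and from the second splitting by the reverse of the deferred destabilizations, i.e., by another sequence of stabilizations. Hence $H^{\ast}$ is a common stabilization of both original splittings, which yields the theorem. The technically delicate steps are this shuffling argument and the Cerf-theoretic normalization that forbids index $0$/$1$ and index $2$/$3$ birth-deaths.
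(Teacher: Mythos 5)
The paper does not prove this theorem; it is quoted from \cite{Reidemeister, Singer} and used as a black box, so there is no in-text argument to compare against. Your Cerf-theoretic outline is the standard modern route (the one sketched, e.g., in \cite{SavelievLectures}), and its overall architecture --- Morse functions, a generic path, birth-deaths as (de)stabilizations, then shuffling destabilizations to the end to extract a common stabilization --- is the right one. The shuffling step in particular is fine as you describe it, provided you invoke the uniqueness-of-stabilization exercise: since a stabilization is independent of the choice of unknotted arc, you may choose the arc of the later stabilization to miss the tube of the earlier destabilization, after which the two operations have disjoint supports and commute.

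The genuine gap is in your treatment of the non-birth-death singular parameters. You assert that every transverse crossing of critical values ``can be interpreted as a handle slide between critical points of the same index,'' but along a generic path an index $1$ critical value will also cross an index $2$ critical value, and at (and after) such a moment there need be no regular level separating the index $\{0,1\}$ points from the index $\{2,3\}$ points --- so the recipe $H_1=\phi_t^{-1}([0,3/2])$, $H_2=\phi_t^{-1}([3/2,3])$ simply does not produce a Heegaard splitting. Restoring it requires Cerf's rearrangement results for one-parameter families (connectivity of the space of \emph{ordered} functions with only birth-death degeneracies), whose obstruction is measured by intersections of the ascending circle of the $1$-handle with the descending circle of the $2$-handle in the intermediate level surface; this, together with the elimination of index $0/1$ and $2/3$ birth-deaths (which is likewise a rearrangement theorem, not ``a standard connectedness argument''), is where the real technical content of Reidemeister--Singer lives. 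As written, your proof silently assumes the path stays ordered, which a generic path is not; you should either cite the relevant Cerf rearrangement theorem explicitly or supply the argument that index $1$/index $2$ crossings can be traded for isotopies of the splitting surface.
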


\subsection{Heegaard diagrams}
We will describe a Heegaard splitting via a Heegaard diagram $\cH$, as defined below. 

\begin{definition}
Let $H$ be a handlebody of genus $g$. A \emph{set of attaching circles} for $H$ is a set $\{ \gamma_1, \dots, \gamma_g \}$ of simple closed curves in $\Sigma = \d H$ such that
\begin{enumerate}
	\item the curves are pairwise disjoint,
	\item $\Sigma - \gamma_1 - \dots - \gamma_g$ is connected,
	\item each $\gamma_i$ bounds a disk in $H$.
\end{enumerate}
\end{definition}

\begin{exercise}
Show that $\Sigma - \gamma_1 - \dots - \gamma_g$ is connected if and only if $[\gamma_1], \dots, [\gamma_g]$ are linearly independent in $H_1(\Sigma; \Z)$.
\end{exercise}

See Figure \ref{fig:attachingcircles} for an example of a set of attaching circles.

\begin{figure}[H]
\centering
\labellist
\endlabellist
\includegraphics[scale=0.8]{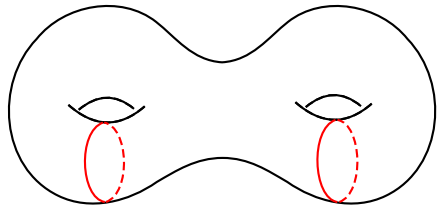}
\vspace{8pt}\caption{A set of attaching circles for the ``obvious'' handlebody in $\R^3$ bounded by $\Sigma$.}
\label{fig:attachingcircles}
\end{figure}

\begin{definition}
A \emph{Heegaard diagram} compatible with $Y = H_1 \cup_f H_2$ is a triple $\cH = (\Sigma, \bfalpha, \bfbeta)$ such that 
\begin{enumerate}
	\item $\Sigma$ is closed oriented surface of genus $g$,
	\item $\bfalpha = \{ \alpha_1, \dots, \alpha_g \}$ is a set of attaching circles for $H_1$,
	\item $\bfbeta = \{ \beta_1, \dots, \beta_g \}$ is a set of attaching circles for $H_2$.
\end{enumerate}
We call $(\Sigma, \bfalpha, \bfbeta)$ a \emph{Heegaard diagram for $Y$}.
\end{definition}

\begin{remark}
The convention in the field is to draw $\alpha$-circles in red and $\beta$-circles in blue.
\end{remark}

See Figure \ref{fig:HDRP3} for examples of Heegaard diagrams for $\R P^3$.  (Another example of a Heegaard diagram is given in Figure \ref{fig:trefoilsurgery} below.)

\begin{figure}[H]
\vspace{10pt}
\centering
\labellist
	\pinlabel {$\alpha$} at 50 -5
	\pinlabel {$\beta$} at 48 102
	\pinlabel {$\alpha_1$} at 180 -5
	\pinlabel {$\alpha_2$} at 290 -5
	\pinlabel {$\beta_1$} at 168 102
	\pinlabel {$\beta_2$} at 290 75
\endlabellist
\includegraphics[scale=1]{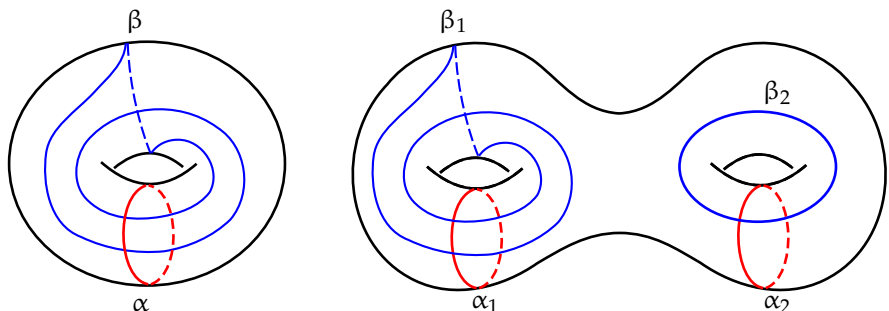}
\vspace{8pt}\caption{Left, a Heegaard diagram for $\R P^3$. Right, a stabilization.}
\label{fig:HDRP3}
\end{figure}

Given a Heegaard diagram $(\Sigma, \bfalpha, \bfbeta)$ for $Y$, we can build a 3-manifold as follows. Thicken $\Sigma$ to $\Sigma \times [0,1]$. Attach thickened disks along $\alpha_i \times \{0\}$, $1 \leq i \leq g$, and along $\beta_i \times \{ 1 \}$, $1 \leq i \leq g$. 

\begin{exercise}
Verify that since $\bfalpha$ and $\bfbeta$ are sets of attaching circles, the boundary of the resulting 3-manifold is homeomorphic to $S^2 \sqcup S^2$. 
\end{exercise}

\noindent Now fill in each of these boundary components with a copy of $B^3$; there is a unique way to do so, since any orientation preserving homeomorphism from $S^2$ to itself is isotopic to the identity. The resulting 3-manifold is homeomorphic to $Y$.

\begin{exercise}\label{exer:H1HD}
Show that $H_1(Y, \Z) \cong H_1(\Sigma; \Z)/ \langle [\alpha_1], \dots [\alpha_g], [\beta_1], \dots, [\beta_g] \rangle$.
\end{exercise}

\begin{exercise}\label{ex:presentationmatrix}
Let $\cH = (\Sigma, \bfalpha, \bfbeta)$ be a Heegaard diagram for $Y$. Compute $H_1(Y; \Z)$ from $\cH$ as follows. Choose an order and orientation on the $\alpha$- and $\beta$-circles and form the matrix $M = (M_{ij})$ where $M_{ij}$ is the algebraic intersection number between the $i^\textup{th}$ $\alpha$-circle and the $j^\textup{th}$ $\beta$-circle. Show that $M$ is a presentation matrix for $H_1(Y; \Z)$.
\end{exercise}

We now consider three ways to alter a Heegaard diagram that do not change the associated 3-manifold. These moves are called \emph{Heegaard moves} and consist of isotopies, handleslides, and stabilizations/destabilizations. Isotopies and handleslides do not change the genus of the Heegaard diagram, while stabilizations (respectively destablizations) increase the genus by one (respectively decrease the genus by one).

Let $\{ \gamma_1, \dots, \gamma_g\}$ be a set of attaching circles for a handlebody $H$ where $\d H = \Sigma$. The set $\{ \gamma_1, \dots, \gamma_g \}$ is \emph{isotopic} to $\{ \gamma'_1, \dots, \gamma'_g \}$ if there is a 1-parameter family of disjoint simple closed curves starting at $\{ \gamma_1, \dots, \gamma_g \}$ and ending at $\{ \gamma'_1, \dots, \gamma'_g \}$.

A \emph{handleslide} of, say,  $\gamma_1$ over $\gamma_2$ produces a new set of attaching circles $\{ \gamma'_1, \gamma_2, \dots, \gamma_g \}$ where $\gamma'_1$ is any simple closed curve disjoint from $\gamma_1, \dots, \gamma_g$
such that $\gamma'_1, \gamma_1,$ and $\gamma_2$ cobound an embedded pair of pants in $\Sigma - \gamma_3 - \dots - \gamma_g$. See Figure \ref{fig:handleslide}.

\begin{figure}[H]
\centering
\labellist
	\pinlabel {$\gamma'_1$} at 140 50
	\pinlabel {$\gamma_1$} at 98 62
	\pinlabel {$\gamma_2$} at 188 62
\endlabellist
\includegraphics[scale=1]{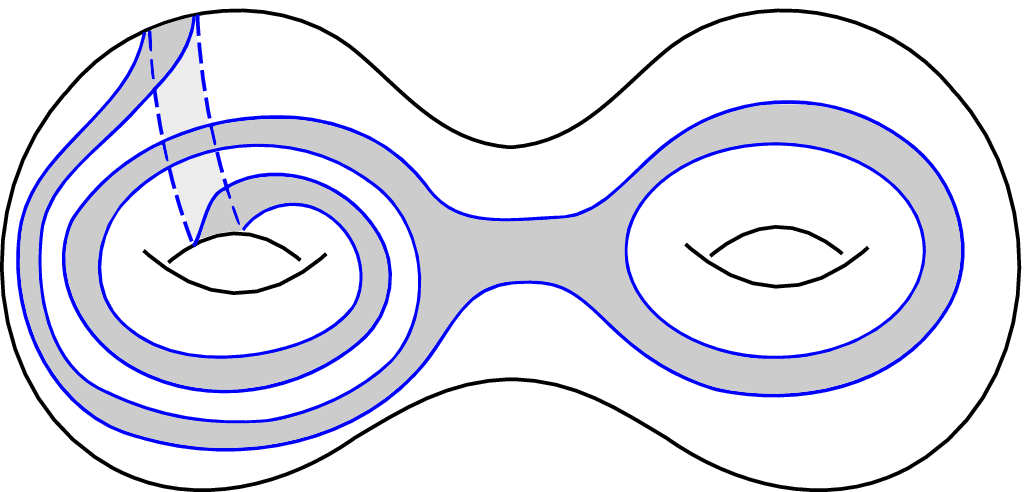}
\vspace{8pt}\caption{A handleslide.}
\label{fig:handleslide}
\end{figure}

Here is another way to think of a handleslide. Suppose that $\gamma_1$ and $\gamma_2$ can be connected by an arc $\delta$ in $\Sigma - \gamma_3 - \dots - \gamma_g$. Let $\gamma'_1$ be the connected sum of  $\gamma_1$ with a parallel copy of $\gamma_2$, where the connected sum is taken along a neigborhood of $\delta$. See Figure \ref{fig:handleslide2}. 

\begin{exercise}
Prove that these two descriptions of handleslides agree (up to isotopy).
\end{exercise}

\begin{figure}[H]
\centering
\labellist
	\pinlabel {$\delta$} at 102 38
	\pinlabel {$\gamma_1$} at 85 58
	\pinlabel {$\gamma_2$} at 139 58
	\pinlabel {$\gamma'_1$} at 305 58
	\pinlabel {$\gamma_2$} at 358 58
\endlabellist
\includegraphics[scale=0.9]{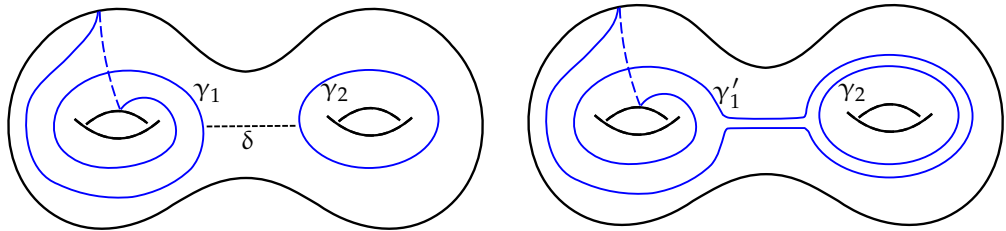}
\vspace{8pt}\caption{Another way to view a handleslide.}
\label{fig:handleslide2}
\end{figure} 

A \emph{stabilization} of a Heegaard diagram $(\Sigma, \bfalpha, \bfbeta)$ results in a Heegaard diagram $(\Sigma', \bfalpha', \bfbeta')$ where
\begin{enumerate}
	\item $\Sigma' = \Sigma \# T^2$, where $T^2 = S^1 \times S^1$,
	\item $\bfalpha' = \bfalpha \cup \{ \alpha_{g+1} \}$ and $\bfbeta' = \bfbeta \cup \{ \beta_{g+1} \}$, where $\alpha_{g+1}$ and $\beta_{g+1}$ are two simple closed curves supported in $T^2$ intersecting transversally in a single point.
\end{enumerate}
We say that $(\Sigma, \bfalpha, \bfbeta)$ is a \emph{destablilization} of $(\Sigma', \bfalpha', \bfbeta')$.

\begin{exercise}
Show that a stabilization of a Heegaard diagram corresponds to a stabilization of the corresponding Heegaard splitting.
\end{exercise}

We have the following standard fact about Heegaard diagrams (see, for example, \cite[Proposition 2.2]{OS3manifolds1}).

\begin{theorem}\label{thm:Heegaardmoves}
Let $(\Sigma, \bfalpha, \bfbeta)$ and $(\Sigma', \bfalpha', \bfbeta')$ be two Heegaard diagrams for $Y$. Then after applying a finite sequence of isotopies, handleslides, and stabilizations to each of them, the two diagrams become homeomorphic (i.e., there is a homeomorphism $\Sigma \to \Sigma'$ taking $\bfalpha$ to $\bfalpha'$ and $\bfbeta$ to $\bfbeta'$, setwise).
\end{theorem}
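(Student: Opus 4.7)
The plan is to combine two ingredients: the Reidemeister--Singer theorem (Theorem~\ref{thm:ReidSing}), which handles the ambiguity in the underlying Heegaard splitting, and a classical disk-surgery argument, which handles the ambiguity in the attaching data for a fixed handlebody.

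First I would invoke Theorem~\ref{thm:ReidSing}: after applying finitely many stabilizations to each of $(\Sigma,\bfalpha,\bfbeta)$ and $(\Sigma',\bfalpha',\bfbeta')$, the associated Heegaard splittings $Y = H_1 \cup_f H_2$ and $Y = H_1' \cup_{f'} H_2'$ become isotopic. Using the exercise identifying stabilization of a Heegaard diagram with stabilization of its associated Heegaard splitting, we may assume from the outset that the two splittings are isotopic. An ambient isotopy of $Y$ carrying the first splitting to the second restricts to a homeomorphism $\Sigma \to \Sigma'$ sending $H_i$ to $H_i'$; pulling the second diagram back along this homeomorphism, we may assume $\Sigma = \Sigma'$ with $H_1 = H_1'$ and $H_2 = H_2'$. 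The problem then reduces to the following claim: given a handlebody $H$ with $\partial H = \Sigma$, any two sets of attaching circles for $H$ are related by isotopies and handleslides in $\Sigma$. Applying this separately to $\bfalpha$ vs.\ $\bfalpha'$ in $H_1$ and to $\bfbeta$ vs.\ $\bfbeta'$ in $H_2$ will finish the reduction.

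To prove the claim, let $\bfalpha = \{\alpha_1,\dots,\alpha_g\}$ and $\bfalpha' = \{\alpha_1',\dots,\alpha_g'\}$ be two sets of attaching circles, bounding complete systems of compressing disks $D = \{D_1,\dots,D_g\}$ and $D' = \{D_1',\dots,D_g'\}$ in $H$. Put the systems in general position so that $D \cap D'$ consists of finitely many properly embedded arcs and simple closed curves. An innermost-disk argument in either system removes all closed components of $D \cap D'$ by isotopies of $\bfalpha'$ in $\Sigma$. Then I would induct on the number of intersection arcs: choose an outermost arc $a$ on some $D_i'$, cobounding with an arc $b \subset \partial D_i'$ a subdisk $\Delta \subset D_i'$ whose interior is disjoint from $D$. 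If $a$ lies on $D_j$, then splitting $D_j$ along $a$ and capping each of the two halves off with a parallel copy of $\Delta$ produces two disks $D_j^+$ and $D_j^-$. At least one of these, together with the remaining $D_k$ for $k \neq j$, forms a complete disk system, and its boundary is isotopic in $\Sigma$ to the result of handlesliding $\alpha_j$ over $\alpha_i$. Replacing $D_j$ accordingly and pushing off $\Delta$ strictly reduces $|D \cap D'|$. Iterating makes $D$ and $D'$ disjoint, after which a short additional argument in the planar surface $\Sigma \smallsetminus (\alpha_1 \cup \dots \cup \alpha_g)$ (each $\alpha_i'$ is now parallel to some collection of the $\alpha_j$, and handleslides can be used to reduce all multiplicities to one) identifies the two systems up to isotopy and handleslides.

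The main obstacle I anticipate is the inductive step above: verifying that at least one of the two capped-off disks actually forms a complete disk system, and interpreting the resulting disk-level modification faithfully as a handleslide of curves on $\Sigma$. This is the technical heart of the argument and requires careful bookkeeping of how sliding a disk in $H$ corresponds to a band sum of attaching circles on $\Sigma$ along an arc in the complement of the other attaching circles. The remaining ingredients (general position, the innermost-circle reduction, and the final planar-surface matching) are standard.
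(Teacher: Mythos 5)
The paper does not prove this statement---it is quoted as a standard fact with a citation to \cite[Proposition 2.2]{OS3manifolds1}---and your outline is precisely the argument given there and in the classical references: reduce via Reidemeister--Singer and an ambient isotopy to two complete systems of attaching circles on the same handlebody, then relate those by the innermost-circle/outermost-arc disk-surgery induction. One small correction: the closed components of $D \cap D'$ are removed by isotoping the \emph{disks} rel boundary across the ball bounded by the innermost sphere (using irreducibility of the handlebody), not by isotopies of $\bfalpha'$ in $\Sigma$; and, as you anticipate, the real work is in checking that exactly one of the two surgered disks completes to a full system and that the corresponding boundary modification is realized by handleslides, which is carried out carefully in, e.g., Johannson's treatment that Ozsv\'ath--Szab\'o cite.
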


We will be interested in \emph{pointed Heegaard diagrams}, that is, tuples $(\Sigma, \bfalpha, \bfbeta, w)$ where $w$ is a basepoint in $\Sigma - \bfalpha - \bfbeta$. We now consider pointed isotopies, where the isotopies are not allowed to pass over $w$, and pointed handleslides, where $w$ is not allowed to be in the pair of pants involved in the handleslide. We have the following upgraded version of Theorem \ref{thm:Heegaardmoves}.

\begin{theorem}[{\cite[Proposition 7.1]{OS3manifolds1}}]\label{thm:pointedHeegaardmoves}
Let $(\Sigma, \bfalpha, \bfbeta, w)$ and $(\Sigma', \bfalpha', \bfbeta', w')$ be two pointed Heegaard diagrams for $Y$. Then after applying a finite sequence of pointed isotopies, pointed handleslides, and stabilizations to each of them, the two diagrams become homeomorphic.
\end{theorem}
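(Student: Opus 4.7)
The plan is to bootstrap from the unpointed Theorem \ref{thm:Heegaardmoves}. Applying that result to $(\Sigma, \bfalpha, \bfbeta)$ and $(\Sigma', \bfalpha', \bfbeta')$ with the basepoints forgotten produces a finite sequence of isotopies, handleslides, and stabilizations taking one diagram to the other. The task reduces to (i) replacing each move in this sequence that violates the pointed condition by a sequence of pointed moves with the same net effect on the curves, and (ii) showing that any two basepoints in $\Sigma \setminus \bfalpha \setminus \bfbeta$ are related by pointed moves, so that the ending basepoint can be aligned with $w'$.

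For (i), suppose first that an isotopy of some $\alpha_i$ sweeps across $w$. Using the defining property that $\Sigma \setminus \bfalpha$ is connected, the region containing $w$ meets some other $\alpha_j$. I would handleslide $\alpha_i$ over $\alpha_j$ along an arc that avoids $w$, after which the slid curve can be isotoped into the intended final position while going around $w$ on the other side; a second handleslide then undoes the algebraic effect of the first. A handleslide whose pair of pants $P$ contains $w$ is treated analogously: if $\alpha_1,\alpha_2,\alpha'_1$ cobound $P$ in $\Sigma \setminus \alpha_3 \setminus \dots \setminus \alpha_g$, choosing a different parallel copy $\tilde{\alpha}_2$ of $\alpha_2$ on the opposite side of $w$ yields a handleslide-equivalent curve $\alpha''_1$ whose defining pair of pants avoids $w$, and $\alpha''_1$ then differs from $\alpha'_1$ by an isotopy which can be made pointed by iterating the previous trick. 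The $\beta$-case is symmetric, and stabilizations cause no trouble since they take place in a small disk which we may arrange to avoid $w$.

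For (ii), let $w$ and $w''$ be two basepoints in $\Sigma \setminus \bfalpha \setminus \bfbeta$ and choose a path $\delta$ from $w$ to $w''$ transverse to all attaching circles. Each time $\delta$ crosses an $\alpha$- or $\beta$-circle, I would push a finger of that circle across $\delta$ using the pointed-isotopy replacement from (i) to clear the way; inductively on the number of crossings, this transports the basepoint from $w$ to $w''$ through pointed moves. Combining (i) with the unpointed sequence produced by Theorem \ref{thm:Heegaardmoves} and then applying (ii) to correct the basepoint yields the desired pointed sequence.

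The main obstacle I expect is the bookkeeping in (i): when the unpointed isotopy sweeps across $w$ a complicated number of times or the pair of pants wraps $w$ in a nontrivial way, one must check inductively that the handleslide-and-reroute procedure always stays pointed and does not create new violations that spiral out of control. Finiteness follows because the original unpointed sequence is finite and each move interacts with $w$ only finitely often. In edge cases where no suitable $\alpha_j$ or $\beta_j$ lies near $w$ to host the rerouting handleslide, I would introduce an auxiliary stabilization near $w$ to supply the needed handle, then destabilize once the offending move has been repaired.
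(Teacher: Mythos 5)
The paper itself gives no proof of this statement---it is quoted from Ozsv\'ath--Szab\'o---so I am comparing your argument with the proof given in the cited reference. Your overall strategy matches it: run Theorem \ref{thm:Heegaardmoves} with the basepoints forgotten, reduce a handleslide whose pair of pants contains $w$ to an offending isotopy, and then show that an isotopy sweeping across $w$ can be traded for pointed moves. The gap is in that last step, which is where essentially all the content of the proposition lives, and the mechanism you give for it does not work as stated. You propose to realize a finger move of $\alpha_i$ across $w$ by one handleslide of $\alpha_i$ over some adjacent $\alpha_j$, a pointed isotopy around the other side of $w$, and a second handleslide ``undoing the algebraic effect of the first.'' A handleslide followed by its inverse is isotopic to the identity through an isotopy determined by the two pairs of pants, and whether that isotopy crosses $w$ exactly once in the required way depends entirely on where $w$ sits; for a general position of $w$ in the complementary region it does not, so this recipe only handles special configurations. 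The correct statement is that a finger pushed across $w$ along an arc $\delta$ agrees, up to pointed isotopy, with a finger pushed along a rerouted arc $\delta'$ that goes around $w$ the other way through the planar surface $\Sigma - \alpha_1 - \dots - \alpha_g$; the disk cobounded by $\delta$ and $\delta'$ contains $w$ together with some of the boundary circles of that planar surface, and sweeping the finger across each such circle is one handleslide of $\alpha_i$ over the corresponding curve. These handleslides accumulate rather than cancel: the replacement is a genuine sequence of forward handleslides, in general over several of the other attaching circles, together with pointed isotopies---not a slide and its undoing. Connectedness of $\Sigma - \bfalpha$ guarantees such a rerouting always exists, so your fallback of stabilizing to ``supply a handle'' is never needed; as written that fallback also invokes a destabilization, which is not among the moves allowed in the statement.

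Your step (ii) is fine and in fact easier than you make it: each curve meeting the arc from $w$ to $w''$ can be pushed off the $w''$ end of that arc, which is already a pointed isotopy because $w''$ is not yet a basepoint, so no rerouting is required there. The effort should be concentrated on stating and proving the finger-move lemma above correctly, including the genus-one case where there are no other curves to slide over and the rerouted finger must instead travel around the handle; once that lemma is in place, the rest of your outline goes through.
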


\subsection{Doubly pointed Heegaard diagrams}
We will also be interested in describing knots inside our 3-manifolds. For simplicity, we will focus on the case where the ambient 3-manifold is $S^3$.

\begin{definition}
A \emph{doubly pointed Heegaard diagram} for a knot $K \subset S^3$ is a tuple $(\Sigma, \bfalpha, \bfbeta, w, z)$, where $w, z$ are basepoints in $\Sigma - \bfalpha - \bfbeta$, such that 
\begin{enumerate}
	\item $(\Sigma, \bfalpha, \bfbeta)$ is a Heegaard diagram for $S^3$, 
	\item $K$ is the union of arcs $a$ and $b$ where $a$ is an arc in $\Sigma - \bfalpha$ connecting $w$ to $z$, pushed slightly into $H_1$ and $b$ is an arc in $\Sigma - \bfbeta$ connecting $z$ to $w$, pushed slightly into $H_2$.
\end{enumerate}
\end{definition}

See Figure \ref{fig:HDtrefoil} for an example of a doubly pointed Heegaard diagram for the left-handed trefoil.

\begin{figure}
\centering
\labellist
\pinlabel {$\scriptstyle\bullet$} at 76 35
\pinlabel {$w$} at 74 29
\pinlabel {$\scriptstyle\bullet$} at 59 16
\pinlabel {$z$} at 62 12
\pinlabel {\small{$a$}} at 64 37
\pinlabel {\small{$b$}} at 71 18
\pinlabel {\small{$c$}} at 75 3
\endlabellist
\includegraphics[scale=1]{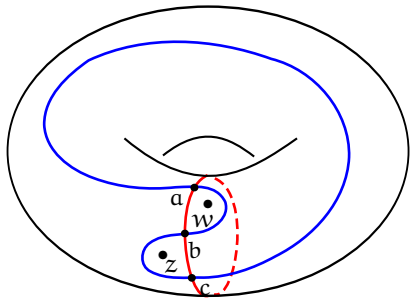}
\vspace{8pt}\caption{A doubly pointed Heegaard diagram for the left-handed trefoil, $-T_{2,3}$. (The labelled points $a, b,$ and $c$ will be used in Section \ref{sec:HFK}.)}
\label{fig:HDtrefoil}
\end{figure}

Given a knot diagram $D$ for a knot $K$, one can obtain a doubly pointed Heegaard diagram for $K$ as follows. Suppose that $D$ has $c$ crossings. Forgetting the crossing data of the diagram $D$ yields an immersed curve $C$ in the plane. The complement of $C$ is $c+2$ regions in the plane, one of which is unbounded. Let $\Sigma$ be the boundary of a regular neighborhood of $C$ in $\R^3$; note that $\Sigma$ is a surface of genus $c+1$. For each of the bounded regions in the complement of $C$, we put a $\beta$-circle on $\Sigma$. For each crossing of $D$, we put an $\alpha$-circle on $\Sigma$ as in Figure \ref{fig:crossing}. Lastly, we add an $\alpha$-circle, say $\alpha_{c+1}$, corresponding to a meridan of $K$. Place a $w$-basepoint on one side of $\alpha_{c+1}$ and a $z$-basepoint on the other side. (Note that which side one choose for $w$ determines the orientation of $K$.) See Figure \ref{fig:trefoilgenus4} for an example.

\begin{figure}
\centering
\labellist
\endlabellist
\includegraphics[scale=1]{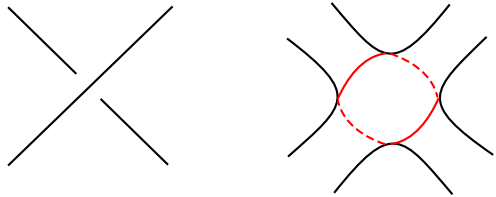}
\vspace{8pt}\caption{A knot crossing and the corresponding portion of the associated doubly pointed Heegaard diagram.}
\label{fig:crossing}
\end{figure}

\begin{figure}
\centering
\labellist
\pinlabel {$\scriptstyle\bullet$} at 250 162
\pinlabel {$w$} at 248 156
\pinlabel {$\scriptstyle\bullet$} at 273 162
\pinlabel {$z$} at 275 156
\endlabellist
\includegraphics[scale=1]{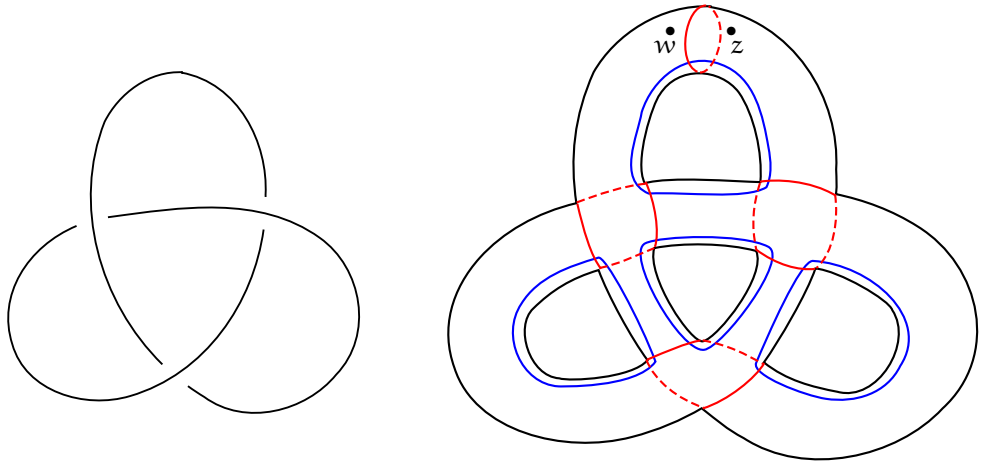}
\vspace{8pt}\caption{Another doubly pointed Heegaard diagram for the left-handed trefoil, $-T_{2,3}$.}
\label{fig:trefoilgenus4}
\end{figure}

\begin{figure}
\centering
\includegraphics[scale=1]{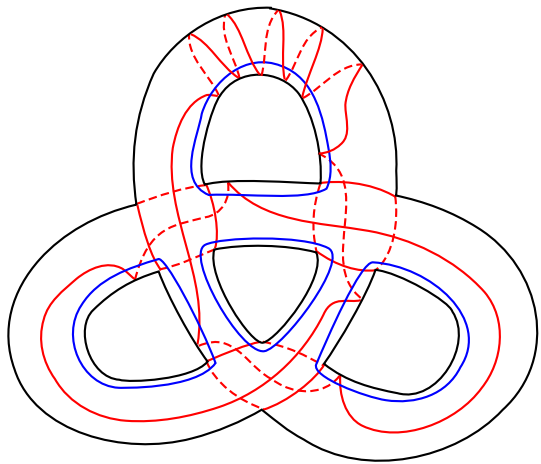}
\vspace{8pt}\caption{A Heegaard diagram for $S^3_{+5}(-T_{2,3})$.}
\label{fig:trefoilsurgery}
\end{figure}

\begin{exercise}
Show that the above construction yields a doubly pointed Heegaard diagram for $K \subset S^3$.
\end{exercise}

In the aforementioned construction, if we replace the circle $\alpha_{c+1}$ with an $n$-framed longitude and remove the basepoint $z$, we obtain a pointed Heegaard diagram for $S^3_n(K)$. See Figure \ref{fig:trefoilsurgery} for an example.

We now consider doubly pointed isotopies, which are required to miss both $w$ and $z$, and doubly pointed handleslides, where neither $w$ nor $z$ is allowed to be in the pair of pants involved in the handleslide. We have the following analog of Theorem \ref{thm:pointedHeegaardmoves}.

\begin{theorem}[{\cite[Proposition 3.5]{OSknots}}]
Let $(\Sigma, \bfalpha, \bfbeta, w, z)$ and $(\Sigma', \bfalpha', \bfbeta', w', z')$ be two doubly pointed Heegaard diagrams for $K \subset S^3$. Then after applying a finite sequence of doubly pointed isotopies, doubly pointed handleslides, and stabilizations to each of them, the two diagrams become homeomorphic.
\end{theorem}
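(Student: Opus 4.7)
The plan is to adapt the Morse/Cerf-theoretic proof of Theorem \ref{thm:pointedHeegaardmoves} by tracking the extra structure encoded by the second basepoint. Recall that a (singly) pointed Heegaard diagram for $Y$ corresponds to a triple $(f, v, \gamma_w)$ where $f \co Y \to \R$ is a self-indexing Morse function with unique index-$0$ and index-$3$ critical points, $v$ is a gradient-like vector field for $f$, and $\gamma_w$ is a flow line of $v$ joining the two extremal critical points. Here $\Sigma = f^{-1}(3/2)$, the $\alpha$- and $\beta$-circles are the traces on $\Sigma$ of the unstable/stable manifolds of the index-$1$/index-$2$ critical points, and $w = \gamma_w \cap \Sigma$. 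The analogous encoding of a doubly pointed Heegaard diagram for a knot $K \subset S^3$ is a quadruple $(f, v, \gamma_w, \gamma_z)$ with $(f, v)$ as above and $\gamma_w, \gamma_z$ two disjoint flow lines from the index-$3$ to the index-$0$ critical point whose union, together with the two critical points, equals $K$ as an unoriented submanifold. The first task is to verify this correspondence in both directions by thickening the arcs $a, b$ of the diagram into the two handlebodies to produce the flow lines.

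Given two doubly pointed Heegaard diagrams $\cH, \cH'$ for $(S^3, K)$, I would choose corresponding quadruples $(f_0, v_0, \gamma_w^0, \gamma_z^0)$ and $(f_1, v_1, \gamma_w^1, \gamma_z^1)$ and connect them by a generic one-parameter family $(f_t, v_t, \gamma_w^t, \gamma_z^t)$ of $K$-compatible data. After pulling back to a fixed surface $\Sigma$ via a smoothly varying family of diffeomorphisms, a generic moment $t$ evolves the associated doubly pointed diagram by a doubly pointed isotopy of $\bfalpha$- and $\bfbeta$-curves; isolated moments at which two critical values cross produce a doubly pointed handleslide via the trajectory between the two critical points; and isolated birth/death moments produce a stabilization or destabilization. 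Because $\gamma_w^t$ and $\gamma_z^t$ persist smoothly throughout the family and never collide with any critical point of $f_t$, the traces $w_t, z_t$ on $\Sigma$ stay disjoint from all $\alpha$- and $\beta$-curves at every $t$, so none of the moves sweeps a curve across a basepoint and no handleslide places a basepoint inside the relevant pair of pants.

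The main obstacle is establishing that such a $K$-compatible path exists, i.e., that the space of quadruples $(f, v, \gamma_w, \gamma_z)$ realizing the fixed knot $K$ is path-connected, with only the three expected codimension-$1$ singularity types on a generic path. I would handle this in two steps: first, show that the space of Morse data $(f, v)$ admitting \emph{some} pair of disjoint flow lines whose union is isotopic to $K$ is path-connected, using a parametric version of Cerf theory applied to the pair (Morse function, embedded knot) together with the fact that $\mathrm{Diff}^+(S^3)$ acts transitively on knots of a fixed isotopy type; second, show that the choice of the pair $(\gamma_w, \gamma_z)$ within a fixed $(f, v)$ is essentially unique up to isotopy through such pairs, since any two admissible pairs are joined by a generically trivial path of flow lines in the complement of the index-$1$ and index-$2$ critical loci. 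With this path-connectedness in hand, the bifurcation analysis reduces to precisely the same codimension-$1$ calculus as in the proof of Theorem \ref{thm:pointedHeegaardmoves}, now additionally constrained to avoid both basepoints.
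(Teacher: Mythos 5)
First, a point of comparison: the paper does not actually prove this statement---it is quoted directly from \cite[Proposition 3.5]{OSknots}---so there is no in-text proof to measure your argument against. Your overall strategy (encode a doubly pointed diagram as Morse data $(f,v)$ together with two flow lines from the index-$3$ to the index-$0$ critical point whose union is $K$, connect two such data by a generic path, and read off isotopies, handleslides, and (de)stabilizations from the codimension-one bifurcations) is the same Cerf-theoretic strategy used in the cited source and in the proof of Theorem \ref{thm:pointedHeegaardmoves}, and the dictionary you set up in your first paragraph is correct.

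However, the step you yourself flag as ``the main obstacle'' is exactly where the content lies, and the argument you sketch for it does not work. For fixed $(f,v)$, a flow line from the maximum to the minimum meets $\Sigma = f^{-1}(3/2)$ in exactly one point, and the set of such flow lines is thereby identified with $\Sigma - \bfalpha - \bfbeta$, which is disconnected. Consequently the space of admissible pairs $(\gamma_w,\gamma_z)$ for fixed $(f,v)$ is not connected, and the claim that ``any two admissible pairs are joined by a generically trivial path of flow lines in the complement of the critical loci'' is false: which component of $\Sigma - \bfalpha - \bfbeta$ each basepoint occupies is a discrete invariant, and changing it either changes the knot the diagram describes or requires genuine handleslides, not an isotopy of flow lines. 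Invoking transitivity of $\mathrm{Diff}^+(S^3)$ on knots of a fixed type does not help, since the resulting diffeomorphism need not preserve $(f,v)$. The standard repair is to reverse the order of quantifiers: fix once and for all a tubular neighborhood $N$ of $K$ and a standard Morse function on $N$ whose only critical points are one minimum and one maximum lying on $K$, with the two arcs of $K$ as flow lines; check that every doubly pointed Heegaard diagram for $K$ arises (up to the allowed moves) from an extension of this fixed germ to all of $S^3$; and then run Cerf theory \emph{relative to $N$} to connect any two extensions. With the flow lines frozen from the start, the basepoints automatically avoid all $\alpha$- and $\beta$-curves throughout the family, and your bifurcation analysis---including the verification that no isotopy crosses $w$ or $z$ and that no handleslide region contains them---goes through as you describe.
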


\begin{exercise}
Find a sequence of doubly pointed Heegaard moves from Figure \ref{fig:HDtrefoil} to Figure \ref{fig:trefoilgenus4}.
\end{exercise}

In Sections \ref{sec:HF} and \ref{sec:HFK}, we will define invariants of 3-manifolds and knots in $S^3$. These invariants will be defined in terms of pointed and doubly pointed Heegaard diagrams, and invariance will follow from the fact that the invariants remain unchanged under pointed and doubly pointed Heegaard moves.

We conclude this section with some additional exercises.

\begin{exercise}\label{exer:41}
Find a genus 1 doubly pointed Heegaard diagram for the figure eight knot.
\end{exercise}

\begin{exercise}\label{exer:Tpq}
Find a genus 1 doubly pointed Heegaard diagram for the torus knot $T_{p,q}$.
\end{exercise}

\begin{exercise}\label{exer:diagramchanges}
Let $(\Sigma, \bfalpha, \bfbeta)$ be a Heegaard diagram for $Y$. What is the manifold described by $(-\Sigma, \bfalpha, \bfbeta)$? By $(\Sigma, \bfbeta, \bfalpha)$?
\end{exercise}

\begin{exercise}
Let $(\Sigma, \bfalpha, \bfbeta, w, z)$ be a Heegaard diagram for a knot $K$ in $S^3$. What is the knot described by $(\Sigma, \bfalpha, \bfbeta, z, w)$? By $(-\Sigma, \bfbeta, \bfalpha, z, w)$?
\end{exercise}


\section{Heegaard Floer homology}\label{sec:HF}
\subsection{Overview}
From a Heegaard diagram $\cH$ for $Y$, we will build chain complexes $\CFhat(\cH)$ and $\CFm(\cH)$ whose chain homotopy types are invariants of $Y$; the former is a finitely generated chain complex over $\F$ and the latter is a finitely generated graded chain complex over $\F[U]$. Throughout, $\F$ denotes the field $\Z/2\Z$ and $U$ is a formal variable of degree $-2$.  We denote the homology of these complexes by $\HFhat(Y)$ and $\HFm(Y)$ respectively; the former is a graded vector space over $\F$ and the latter is a graded module over $\F[U]$. When discussing properties that apply to either flavor of Heegaard Floer homology, will write $\HF^\circ$ rather than $\HFhat$, $\HFm$, $\HFp$, or $\HFinf$ (the latter two of which are defined below). The material in this section draws from \cite[Sections 4-8]{OSintro} and \cite[Sections 3 and 4]{OS3manifolds1}.

For simplicity, we will consider the case where $Y$ is a rational homology sphere. The case $b_1(Y) > 0$ requires some additional admissibility assumptions on $\cH$; see \cite[Section 4.2]{OS3manifolds1}. Before defining these invariants, we discuss certain aspects of their formal structure.

The Heegaard Floer homology of $Y$ splits as a direct sum over $\spinc$ structures on $Y$:
\[ \HF^\circ(Y) = \bigoplus_{\mfs \in \spinc(Y)} \HF^\circ(Y, \mfs) \]
See \cite[Section 6]{OSintro} for a discussion of $\spinc$ structures in terms of homotopy classes of non-vanishing vector fields. Note that $\spinc$ structures on $Y$ are in (non-canonical) bijection with elements in $H^2(Y; \Z) \cong H_1(Y; \Z)$. The above splitting on the level of homology comes from a splitting on the chain level.

Both $\HFhat(Y)$ and $\HFm(Y)$ are finitely generated and graded. Finitely generated graded vector spaces are simply a direct sum of graded copies of $\F$. Finitely generated graded modules over a PID are also completely characterized. (See, for example, \cite[Proposition A.4.3]{OSSgrid}.) Since the only homogenously graded polynomials in $\F[U]$ are the monomials $U^n$, any finitely generated graded module over $\F[U]$ is isomorphic to 
\begin{equation}\label{eq:fingenmod}
	\bigoplus_i \F[U]_{(d_i)} \oplus \bigoplus_j \F[U]_{(c_j)}/(U^{n_j}),
\end{equation}
where $\F[U]_{(d)}$ denotes the ring $\F[U]$ where the element $1$ has grading $d$.

Moreover, by \cite[Theorem 10.1]{OS3manifolds2}, we have that for a rational homology sphere $Y$, for all $\mfs \in \spinc(Y)$,
\begin{equation}\label{eq:HFm1summand}
\HFm(Y, \mfs) \cong \F[U]_{(d)} \oplus \bigoplus_j \F[U]_{(c_j)}/U^{n_j},
\end{equation}
that is, there is exactly one free summand in $\HFm(Y, \mfs)$. We define the \emph{$d$-invariant} of $(Y, \mfs)$ to be
\[ d(Y, \mfs) = \max \{ \gr(x) \mid x \in \HFm(Y, \mfs),\ U^N x \neq 0 \ \forall \ N > 0 \}. \]
Note that $d(Y, \mfs)$ is equal to $d$ in Equation \eqref{eq:HFm1summand}.
The $U$-torsion part is called $\HFred(Y, \mfs)$; that is, using the notation in Equation \eqref{eq:HFm1summand}, 
\[ \HFred(Y, \mfs) = \bigoplus_j \F[U]_{(c_j)}/U^{n_j}. \]
A rational homology sphere $Y$ with $\HFred(Y, \mfs) = 0$ for all $\mfs \in \spinc(Y)$ is called an \emph{L-space}.

\begin{remark}\label{rk:grconv}
Different grading conventions exist in the literature. We have chosen our grading convention so that $\HFm(S^3) \cong \F[U]_{(0)}$, as opposed to the perhaps more common $\F[U]_{(-2)}$. Our grading convention choice simplifies certain formulas, such as the K\"unneth formula \cite[Theorem 1.5]{OS3manifolds2}:
\[ \CFm(Y_1 \# Y_2, \mfs_1 \# \mfs_2) \simeq \CFm(Y_1, \mfs_1) \otimes_{\F[U]}  \CFm(Y_2, \mfs_2). \] 
Note that our choice of grading convention also impacts the gradings on $\HFred(Y, \mfs)$.
\end{remark}

The chain complexes $\CFhat(\cH, \mfs)$ and $\CFm(\cH, \mfs)$ fit in the following $U$-equivariant short exact sequence:
\begin{equation}\label{eqn:minushatses}
0 \to \CFm(\cH, \mfs) \xrightarrow{\cdot U} \CFm(\cH, \mfs) \to \CFhat(\cH, \mfs) \to 0,
\end{equation}
yielding the $U$-equivariant exact triangle:
\begin{equation*}
\label{pic:exact}
\begin{tikzpicture}[baseline=(current  bounding  box.center)]
\node(1)at(0,0){$\HFhat(Y, \mfs)$.};
\node(2)at (-2,1){$\HFm(Y, \mfs)$};
\node(3)at (2,1){$\HFm(Y, \mfs)$};
\path[->](2)edge node[above]{$\cdot U$}(3);
\path[->](3)edge (1);
\path[->](1)edge(2);
\end{tikzpicture}
\end{equation*}

\begin{remark}\label{rk:Lspace}
Equation \eqref{eq:HFm1summand} and the above exact triangle imply that for a rational homology sphere $Y$, we have $\dim \HFhat(Y) \geq |H_1(Y; \Z)|$; cf. Exercise \ref{exer:EulerHF}. It follows that $\dim \HFhat(Y) = |H_1(Y; \Z)|$ if and only if $Y$ is an L-space.
\end{remark}

We may also consider $\HFinf(Y, \mfs) = H_*(\CFinf(Y, \mfs))$ where 
\[ \CFinf(Y, \mfs) = \CFm(Y, \mfs) \otimes_{\F[U]} \F[U, U^{-1}]. \]
Note that $\CFm(Y, \mfs) \subset \CFinf(Y, \mfs)$ and define $\CFp(Y, \mfs)$ to be the quotient 
\[ \CFinf(Y, \mfs)/CF^-(Y, \mfs). \] 
We have the following short exact sequence:
\[ 0 \to \CFm(Y, \mfs) \to \CFinf(Y, \mfs) \to \CFp(Y, \mfs) \to 0.\]
The above short exact sequence on the chain level induces the following $U$-equivariant exact triangle:
\begin{equation}
\label{eq:exact}
\begin{tikzpicture}[baseline=(current  bounding  box.center)]
\node(1)at(0,0){$\HFp(Y, \mfs)$.};
\node(2)at (-2,1){$\HFm(Y, \mfs)$};
\node(3)at (2,1){$\HFinf(Y, \mfs)$};
\path[->](2)edge node[above]{}(3);
\path[->](3)edge (1);
\path[->](1)edge(2);
\end{tikzpicture}
\end{equation}

\begin{exercise}\label{exer:HFred}
Use Equation \eqref{eq:HFm1summand} to show that when $Y$ is a rational homology sphere, $\HFinf(Y, \mfs) \cong \F[U, U^{-1}]$. Use this fact combined with Equation \eqref{eq:exact} to show that if 
\[ \HFm(Y, \mfs) \cong \F[U]_{(d)} \oplus \bigoplus_j \F[U]_{(c_j)}/U^{n_j}, \]
then 
\[ \HFp(Y, \mfs) \cong \mathcal{T}^+_{(d+2)} \oplus \bigoplus_j  \F[U]_{(c_j+1)}/U^{n_j}, \]
where $\mathcal{T}^+ = \F[U, U^{-1}]/U \F[U]$.
\end{exercise}

\begin{remark}
In light of Exercise \ref{exer:HFred}, one may define $\HFred(Y, \mfs)$ in terms of $\HFm(Y, \mfs)$ or $\HFp(Y, \mfs)$; this choice also affects the grading of $\HFred(Y, \mfs)$. The most common convention in the literature is to define $\HFred$ in terms of $\HFp$.
\end{remark}

A smooth cobordism $W$ from $Y_0$ to $Y_1$ (that is, a compact smooth 4-manifold $W$ with $\d W = -Y_0 \sqcup Y_1$) induces a map from the Heegaard Floer homology of $Y_0$ to the Heegaard Floer homology of $Y_1$. More specifically, given a $\spinc$ structure $\mft$ on $W$, we have a homomorphism
\[ F^\circ_{W, \mft} \co \HF^\circ(Y_0, \mft|_{Y_0}) \to \HF^\circ(Y_1, \mft|_{Y_1}), \]
which is defined via a handle decomposition of $W$ (but does not depend on the choice of handle decomposition). The cobordism map $F^\circ_{W, \mft}$ has a grading shift depending only on $W$ and $\mft$. See \cite{OS:four} for further details or \cite[Section 3.2]{OSsurvey2} for an expository overview.

Let $Z$ be a compact 3-manifold with torus boundary, and $\gamma_0, \gamma_1$, and $\gamma_\infty$ three simple closed curves in $\d Z$ such that
\[ \# (\gamma_0 \cap \gamma_1) = \# (\gamma_1 \cap \gamma_\infty) = \# (\gamma_\infty \cap \gamma_0) = -1.\]
See Figure \ref{fig:triple} for an example of such a triple of curves (which justifies the choice of subscripts on $\gamma$). Let $Y_i$ be the result of Dehn filling $Z$ along $\gamma_i$ for $i=0,1, \infty$; that is, $Y_i$ is the union of $Z$ and a solid torus, where $\gamma_i$ is a meridian of the solid torus. Then by \cite[Theorem 9.12]{OS3manifolds2}, we have an exact triangle
\begin{equation}\label{eq:surgeryexacttriangle}
\begin{tikzpicture}[baseline=(current  bounding  box.center)]
\node(1)at(0,0){$\HFhat(Y_\infty)$};
\node(2)at (-2,1){$\HFhat(Y_0)$};
\node(3)at (2,1){$\HFhat(Y_1)$};
\path[->](2)edge node[above]{$\widehat{F}_0$}(3);
\path[->](3)edge node[below right]{$\widehat{F}_1$}(1);
\path[->](1)edgenode[below left]{$\widehat{F}_\infty$}(2);
\end{tikzpicture}
\end{equation}
where $\widehat{F}_i$ is the cobordism map associated to the corresponding 2-handle cobordism. The analogous exact triangle also holds for $\HFp$. 

\begin{figure}[H]
\centering
\labellist
\pinlabel {$\gamma_0$} at 78 50
\pinlabel {$\gamma_1$} at 77 89
\pinlabel {$\gamma_\infty$} at 118 80
\endlabellist
\includegraphics[scale=0.8]{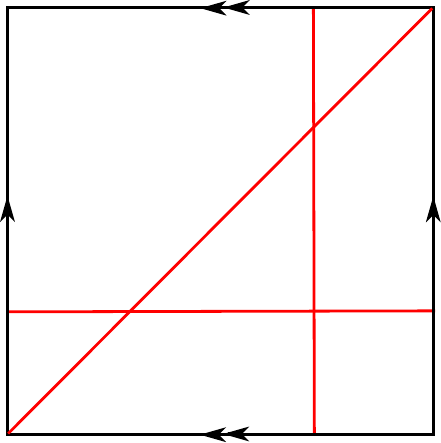}
\vspace{8pt}\caption{}
\label{fig:triple}
\end{figure}

\begin{remark}\label{rk:completions}
For the analogous exact triangle for the minus or infinity flavors, one must work over the formal power series ring $\F[[U]]$ and semi-infinite Laurent polynomials $\F[[U, U^{-1}]$ respectively; see \cite[Section 2]{MOlink}.
\end{remark}

\subsection{The Heegaard Floer chain complex}
Now that we have completed our overview of the structural properties of Heegaard Floer homology, we turn to the construction of the Heegaard Floer chain complex associated to a Heegaard diagram.

Let $\cH = (\Sigma, \bfalpha, \bfbeta, w)$ be a pointed Heegaard diagram for $Y$ where $\Sigma$ has genus $g$, and as usual $\bfalpha = \{ \alpha_1, \dots, \alpha_g \}$ and $\bfbeta = \{ \beta_1, \dots, \beta_g \}$. We further require that the $\alpha$- and $\beta$-circles intersect transversally. 

Consider the $g$-fold symmetric product 
\[ \Sym^g(\Sigma) = \Sigma^{\times g} /S_g, \]
where $S_g$ denotes the symmetric group on $g$-elements. Points in $\Sym^g(\Sigma)$ consist of unordered $g$-tuples of points in $\Sigma$. 

\begin{exercise}
Even though the action of $S_g$ on $\Sigma^{\times g}$ is not free, show that the quotient $\Sym^g(\Sigma)$ is a smooth manifold. (Hint: Fix a complex structure on $\Sigma$. Then use the Fundamental Theorem of Algebra to define a map between ordered and unordered $g$-tuples of complex numbers.)
\end{exercise}

We have two half-dimensional subspaces of $\Sym^g(\Sigma)$:
\[ \bbT_\alpha = \alpha_1 \times \dots \times \alpha_g \qquad \textup{ and } \qquad \bbT_\beta = \beta_1 \times \dots \times \beta_g. \]
The chain complex $\CFhat(\cH)$ is freely generated over $\F$ by $\bbT_\alpha \cap \bbT_\beta$, that is, intersection points between $\bbT_\alpha$ and $\bbT_\beta$. Note that points in $\bbT_\alpha \cap \bbT_\beta$ can be viewed in $\Sigma$ as unordered $g$-tuples of intersection points between the $\alpha$- and $\beta$-circles such that each $\alpha$-circle, respectively $\beta$-circle, is used exactly once. 

\begin{exercise}
Find all of the generators $\bbT_\alpha \cap \bbT_\beta$ in Figure \ref{fig:trefoilsurgery}, viewed as unordered $g$-tuples of points in $\Sigma$.
\end{exercise}

\noindent We will also be interested in the following subspace of $\Sym^g(\Sigma)$:
\[ V_w = \{w \} \times \Sym^{g-1}(\Sigma). \]
This subspace can be viewed as unordered $g$-tuples of points in $\Sigma$ such that at least one point is $w$.

The differential $\d \co \CFhat(\cH) \to \CFhat(\cH)$ will count certain holomorphic disks in $\Sym^g(\Sigma)$. Let $\bbD$ denote the unit disk in $\bbC$, and let $e_\alpha$, respectively $e_\beta$, denote the arc in $\d \bbD$ with $\operatorname{Re} (z) \geq 0$, respectively $\operatorname{Re} (z) \leq 0$. A \emph{Whitney disk} from $\bfx$ to $\bfy$ is a continuous map $\phi \co \bbD \to \Sym^g(\Sigma)$ such that 
\begin{enumerate}
	\item $\phi(-i) = \bfx$,
	\item $\phi(i) = \bfy$,	
	\item $\phi(e_\alpha) \subset \bbT_{\alpha}$,
	\item $\phi(e_\beta) \subset \bbT_{\beta}$.
\end{enumerate}
See Figure \ref{fig:disk}. Let $\pi_2(\bfx, \bfy)$ denote the set of homotopy classes of Whitney disks from $\bfx$ to $\bfy$.

\begin{figure}[H]
\centering
\labellist
\pinlabel {$\bfx$} at 25 -5
\pinlabel {$\bfy$} at 25 53
\endlabellist
\includegraphics[scale=1.5]{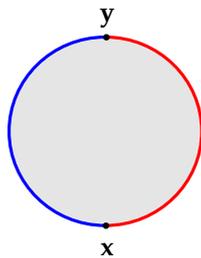}
\vspace{8pt}\caption{A schematic of a Whitney disk.}
\label{fig:disk}
\end{figure}

\begin{exercise}\label{exer:spincepsilon}
Following \cite[Section 5.1]{OSintro}, there is a rather straightforward obstruction to the existence of a Whitney disk from $\bfx$ to $\bfy$. 
\begin{enumerate}
	\item Viewing $\bfx$, respectively $\bfy$, as an unordered $g$-tuple $\{x_1, \dots, x_g \}$, respectively $\{ y_1, \dots, y_g \}$, of points in $\bfalpha \cap \bfbeta$, we may choose a collection of arcs $a \subset \bfalpha$ such that $\d a = y_1 + \dots + y_g - x_1 - \dots - x_g$ and a collection of arcs $b \subset \bfbeta$ such that $\d b = x_1 + \dots + x_g - y_1 - \dots - y_g$. Then $a-b$ is a 1-cycle in $\Sigma$. Using Exercise \ref{exer:H1HD}, verify that the image of $\epsilon(\bfx, \bfy) = [a-b] \in H_1(Y; \Z)$ is well-defined.
	\item Show that $H_1(\Sym^g(\Sigma); \Z) \cong H_1(\Sigma; \Z)$. (Hint: See \cite[Lemma 2.6]{OS3manifolds1}.) Combined with Exercise \ref{exer:H1HD}, conclude that
\[ \frac{H_1(\Sym^g(\Sigma); \Z)}{H_1(\bbT_\alpha; \Z) \oplus H_1(\bbT_\beta; \Z)} \cong H_1(Y; \Z), \]
and that if $\epsilon(\bfx, \bfy) \neq 0 \in H_1(Y; \Z)$, then there cannot exist a Whitney disk in $\Sym^g(\Sigma)$ from $\bfx$ to $\bfy$.
\end{enumerate}
\end{exercise}

A choice of complex structure on $\Sigma$ induces one on $\Sym^g(\Sigma)$. Given $\phi \in \pi_2(\bfx, \bfy)$, let $\cM(\phi)$ denote the moduli space of holomorphic representatives of $\phi$. (\cite[Proposition 3.9]{OS3manifolds1} ensures that under suitably generic conditions, $\cM(\phi)$ is smooth.) The expected dimension of $\cM(\phi)$ is called the \emph{Maslov index} of $\phi$ and denoted $\mu(\phi)$. There is an $\R$-action on $\cM(\phi)$ coming from complex automorphisms of $\bbD$ that preserve $i$ and $-i$. (One way to see this $\R$-action is to use the Riemann mapping theorem to change the unit disk to the infinite strip $[0,1] \times i\R \subset \bbC$ where $e_\alpha$ corresponds to $\{1\} \times i\R$ and $e_\beta$ to $\{0\} \times i\R$. Then the $\R$-action corresponds to vertical translations.) Let $\cMhat(\phi) = \cM(\phi)/\R$. If $\mu(\phi) = 1$, then $\cMhat(\phi)$ is a compact zero dimensional manifold \cite[Theorem 3.18]{OS3manifolds1}.  Let $n_w(\phi)$ denote the algebraic intersection between $\phi(\bbD)$ and $V_w$.

We define a relative $\Z$-grading, called the \emph{Maslov grading}, on $\CFhat(\cH)$ as follows. (Recall that a relative grading defines the difference between the gradings of two elements.)
Let $\phi \in \pi_2(\bfx, \bfy)$. Define 
\[ \gr(\bfx) - \gr(\bfy) = \mu(\phi) - 2n_w(\phi) \]
By \cite[Proposition 2.15]{OS3manifolds1}, this relative grading is well-defined whenever $\epsilon(\bfx, \bfy) = 0$, where $\epsilon$ is the function defined in Exercise \ref{exer:spincepsilon}.

We may use the function $\epsilon$ to partition the intersection points in $\bbT_\alpha \cap \bbT_\beta$. These equivalence classes are in bijection with $H_1(Y; \Z)$ and hence are in bijection with $\spinc(Y)$; see \cite[Section 2.6]{OS3manifolds1} for more details. In particular, we have a splitting $\CFhat(\cH) = \oplus_{\mfs \in \spinc(Y)} \CFhat(\cH, \mfs)$.

The differential $\d \co \CFhat(\cH, \mfs) \to \CFhat(\cH, \mfs)$ is defined to be
\[ \d \bfx = \sum_{\bfy \in \bbT_\alpha \cap \bbT_\beta} \sum_{\substack{\phi \in \pi_2(\bfx, \bfy) \\ \mu(\phi) = 1 \\n_w(\phi) = 0}} \# \cMhat(\phi) \; \bfy. \]
Note that in the first summation, it suffices to only consider $\bfy$ with $\epsilon(\bfx, \bfy) = 0$; that is, the differential respects the splitting $\CFhat(\cH) = \oplus_{\mfs \in \spinc(Y)} \CFhat(\cH, \mfs)$. It follows from the definition of the relative Maslov grading that the differential $\d$ lowers the Maslov grading by one. By \cite[Theorem 4.1]{OS3manifolds1}, $\d^2=0$. Let 
\[ \HFhat(\cH, \mfs) = H_*(\CFhat(\cH, \mfs)). \]

\begin{remark}\label{rk:Z2grading}
We may define a relative $\Z/2\Z$ grading on $\CFhat(\cH)$ that agrees with the mod $2$ reduction of the relative Maslov grading as follows. Following \cite[Section 5]{OS3manifolds2}, choose an orientation of $\bbT_\alpha$ and $\bbT_\beta$ and define the relative $\Z/2\Z$ grading between $\bfx, \bfy \in \bbT_\alpha \cap \bbT_\beta$ to be the product of their local intersection numbers. (Here, we are identifying $\Z/2\Z$ with $\{ \pm 1 \}$ under multiplication.)
\end{remark}

\begin{exercise}\label{exer:EulerHF}
Let $\cH$ be a Heegaard diagam for a rational homology sphere $Y$. Use Remark \ref{rk:Z2grading} to prove that $\chi(\HFhat(\cH)) = \pm |H_1(Y; \Z)|$ and conclude that $\operatorname{dim} \HFhat(\cH) \geq |H_1(Y; \Z)|$. (Hint: Use Exercise \ref{ex:presentationmatrix}.) Compare with Remark \ref{rk:Lspace}.
\end{exercise}

We now define the chain complex $\CFm(\cH)$, which is freely generated over $\F[U]$ by $\bbT_\alpha \cap \bbT_\beta$. Here, $U$ is a formal variable with $\gr(U) = -2$. We no longer require that Whitney disks miss the basepoint (i.e., we remove the $n_w (\phi) = 0$ requirement), and instead use the variable $U$ to count the algebraic intersection number $n_w(\phi)$ of $\phi(\bbD)$ and $V_w$:

\[ \d \bfx = \sum_{\bfy \in \bbT_\alpha \cap \bbT_\beta} \sum_{\substack{\phi \in \pi_2(\bfx, \bfy) \\ \mu(\phi) = 1}} \# \cMhat(\phi) U^{n_w(\phi)} \; \bfy. \]
The definition of $\d$ is extended to all elements of $\CFm(\cH)$ by $\F[U]$-linearity. As in the case of $\CFhat$, the chain complex $\CFm(\cH)$ splits over $\spinc(Y)$, that is, $\CFm(\cH) = \oplus_{\mfs \in \spinc(Y)} \CFm(\cH, \mfs)$.
By \cite[Theorem 4.3]{OS3manifolds1}, $\d^2 =0$. Let 
\[ \HFm(\cH, \mfs) = H_*(\CFm(\cH, \mfs)). \]

\begin{theorem}[{\cite[Theorem 1.1]{OS3manifolds1}}]\label{thm:isotype}
Let $\cH$ be a pointed Heegaard diagram for $Y$. Then the isomorphism type of $\HFhat(\cH, \mfs)$ and  $\HFm(\cH, \mfs)$  is an invariant of $Y$ and $\mfs$.
\end{theorem}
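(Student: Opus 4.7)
The plan is to reduce invariance to the pointed Heegaard moves via Theorem~\ref{thm:pointedHeegaardmoves}: any two pointed Heegaard diagrams for $Y$ are related by a finite sequence of pointed isotopies, pointed handleslides, and stabilizations, together with a diffeomorphism of $\Sigma$ (which trivially induces an isomorphism of chain complexes). Thus it suffices to check that the chain homotopy type of $\CFhat(\cH,\mfs)$ and $\CFm(\cH,\mfs)$ is preserved under each of the three moves. Before addressing the moves, one must show that the chain complex is independent, up to chain homotopy equivalence, of the auxiliary analytic data: the complex structure on $\Sigma$, the induced almost complex structure on $\Sym^g(\Sigma)$, and the generic perturbations used to make $\cM(\phi)$ smooth. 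This is the standard continuation-map argument: given two generic choices $J_0$ and $J_1$, a path $J_t$ gives rise to a chain map by counting $J_t$-holomorphic disks; a $1$-parameter family of paths produces the chain homotopy showing the map is independent of the path; and composing the maps $J_0 \to J_1$ and $J_1 \to J_0$ gives a chain homotopy equivalence.

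With analytic invariance established, \emph{pointed isotopy} invariance follows because small isotopies of $\bfalpha$ or $\bfbeta$ can be absorbed into the choice of $J$, while isotopies that pass through tangencies are handled by counting strips with respect to a Hamiltonian isotopy of $\bbT_\beta$ (a moving-Lagrangian boundary value problem). The basepoint condition $n_w(\phi) = 0$ (or the use of $U^{n_w(\phi)}$ in the minus flavor) rules out the sphere bubbling at $V_w$ that would otherwise obstruct the chain map property.

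\emph{Handleslide invariance} is the main technical obstacle. Suppose $\bfbeta'$ is obtained from $\bfbeta$ by a handleslide; form the Heegaard triple $(\Sigma,\bfalpha,\bfbeta,\bfbeta')$ and define a chain map $\CFm(\Sigma,\bfalpha,\bfbeta,\mfs) \to \CFm(\Sigma,\bfalpha,\bfbeta',\mfs)$ by counting holomorphic triangles in $\Sym^g(\Sigma)$ with boundary on $\bbT_\alpha$, $\bbT_\beta$, $\bbT_{\beta'}$, with one vertex taken at a distinguished top-dimensional generator $\Theta \in \CFm(\Sigma,\bfbeta,\bfbeta')$. The existence of a canonical such $\Theta$ uses that $(\Sigma,\bfbeta,\bfbeta')$ is a Heegaard diagram for $\#^g (S^1 \times S^2)$, whose Heegaard Floer homology in the torsion $\spinc$ structure has a unique top-graded generator. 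The chain map property, and the fact that composition with the reverse handleslide map is chain homotopic to the identity, are proved by considering degenerations of holomorphic rectangles (i.e., the associativity of the holomorphic triangle product) combined with a model calculation in a standard Heegaard quadruple. The hardest ingredients are establishing this associativity and carrying out the model identification of the composite with the identity map.

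Finally, \emph{stabilization invariance} is proved by a neck-stretching argument. If $(\Sigma',\bfalpha',\bfbeta')$ is obtained by adding a handle with new curves $\alpha_{g+1},\beta_{g+1}$ intersecting transversely at a single point $p$, then every $\bfx' \in \bbT_{\alpha'}\cap\bbT_{\beta'}$ is canonically an extension of a unique $\bfx \in \bbT_\alpha\cap\bbT_\beta$ by the point $p$, inducing an identification of the underlying modules of $\CFm(\cH,\mfs)$ and $\CFm(\cH',\mfs)$. A degeneration argument as the neck of the connect sum $\Sigma' = \Sigma \# T^2$ is stretched to infinity identifies the differentials after this identification, since holomorphic disks must degenerate into a disk in $\Sym^g(\Sigma)$ together with a constant contribution at $p$. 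This completes invariance under all three moves and hence establishes the theorem.
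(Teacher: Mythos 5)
Your outline is correct and matches the strategy the paper itself indicates: the notes do not prove Theorem~\ref{thm:isotype} but defer to \cite{OS3manifolds1}, remarking only that one must show independence of the complex structure and invariance under the pointed Heegaard moves of Theorem~\ref{thm:pointedHeegaardmoves}, which is exactly the decomposition you carry out (continuation maps, Hamiltonian isotopies, holomorphic triangles with the top generator $\Theta$ for $\#^g(S^1\times S^2)$, and neck-stretching for stabilization). No discrepancy with the paper's (cited) argument.
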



\noindent
In order to prove the above theorem, one must show that $\HFhat(\cH, \mfs)$ is independent of the choice of Heegaard diagram, basepoint, and complex structure. Indeed, Ozsv\'ath-Szab\'o show that  Heegaard moves induce homotopy equivalences on the associated chain complexes, as do changes in complex structure.

\begin{remark}
Note that $\CFhat(\cH) = \CFm(\cH)/(U=0)$, giving rise to the short exact sequence in \eqref{eqn:minushatses}.
\end{remark}

In \cite[Theorem 7.1]{OS:four}, Ozsv\'ath-Szab\'o prove that the relative $\Z$-grading may be lifted to a well-defined absolute $\Q$-grading; this is done by considering a cobordism from $S^3$ to $Y$ and considering holomorphic triangles associated to a Heegaard triple.

We conclude this section with some computations.

\begin{exercise}\label{exer:Lpq}
Compute $\HFhat(L(p,q), \mfs)$ and $\HFm(L(p,q), \mfs)$ for all $\mfs \in \spinc(L(p,q))$.
\end{exercise}

In general, computing $\HFhat(Y)$ and $\HFm(Y)$ from a Heegaard diagram is not easy. In Section \ref{sec:surgery}, we will see how to compute $\HFhat(Y)$ and $\HFm(Y)$ when $Y$ is surgery on a knot in $S^3$.

\begin{exercise}
Recall that $pq \pm 1$ surgery on the torus knot $T_{p,q}$ is a lens space. Use Exercise \ref{exer:Lpq} and Equation \eqref{eq:surgeryexacttriangle} to compute $\HFhat(S^3_n(T_{p,q}))$ for $n \geq pq-1$.
\end{exercise}

\begin{exercise}\label{exer:conj}
Let $\cH = (\Sigma, \bfalpha, \bfbeta, w)$ be a Heegaard diagram for $Y$. Let $\cH' = (-\Sigma, \bfbeta, \bfalpha, w)$. Show that the chain complex $\CFm(\cH)$ is isomorphic to $\CFm(\cH')$.
\end{exercise}

\begin{exercise}
Use Exercise \ref{exer:diagramchanges} and the fact that $d(-Y, \mfs) = -d(Y, \mfs)$ \cite[Proposition 4.2]{OS3manifolds2} to determine the relationship between $\HFm(Y)$ and $\HFm(-Y)$ as absolutely graded modules.
\end{exercise}

In general, direct computations of Heegaard Floer homology are difficult. Sarkar and Wang \cite{SarkarWang} proved that $\HFhat$ can be computed combinatorially using nice diagrams. Lipshitz-Ozsv\'ath-Thurston \cite{LOTfactoring} give an alternative combinatorial method for computing $\HFhat$ using bordered Floer homology, a version of Heegaard Floer homology for 3-manifolds with boundary. Manolescu-Ozsv\'ath-Thurston \cite{MOT} show that all flavors of Heegaard Floer homology are algorithmically computable in terms of a link surgery description for a 3-manifold and a grid diagram for that link. In Section \ref{sec:surgery}, we will see how to compute $\HFm$ of surgery on a knot in $S^3$ in terms of the knot Floer complex, a knot invariant which we describe in Section \ref{sec:HFK} below.

Lastly, we note that Theorem \ref{thm:isotype} merely states that the isomorphism type of $\HFm(Y)$ is an invariant of $Y$. Juh\'asz-Thurston-Zemke \cite{JuhaszThurstonZemke} prove that Heegaard Floer homology in fact assigns a concrete group (that is, not just a group up to isomorphism) to a 3-manifold $Y$ equipped with a basepoint.


\section{Knot Floer homology}\label{sec:HFK}
\subsection{Overview}
Let $K$ be a knot in $S^3$. The simplest version of the knot invariant is $\HFKhat(K)$, a bigraded vector space over the field $\F=\Z/2\Z$:
\[ \HFKhat(K) = \bigoplus_{m,s \in \Z} \HFKhat_m(K,s), \]
where $\HFKhat(K)$ is supported in finitely many bigradings.
Knot Floer homology categorifies the Alexander polynomial \cite[Equation (1)]{OSknots} in the sense that the graded Euler characteristic of $\HFKhat(K)$ is $\Delta_K(t)$:
\[ \Delta_K(t) = \sum_{m, s} (-1)^m \dim \HFKhat_{m}(K,s) \; t^s. \]
Moreover, knot Floer homology strengthens two key properties of the Alexander polynomial. Let 
\[ \Delta_K(t) = a_0 + \sum_{s > 0} a_s(t^s+t^{-s}) \]
denote the symmetrized Alexander polynomial. While the Alexander polynomial gives a lower bound on the genus of $K$ in the following manner:
\[ g(K) \geq \max \{ s \mid a_s \neq 0 \}, \]
knot Floer homology actually detects $g(K)$ \cite{OSgenus}:
\begin{equation}\label{eq:genus}
	g(K) = \max \{ s \mid \HFKhat(K, s) \neq 0\}.
\end{equation}
Similarly, while the Alexander polynomial obstructs fiberedness in that 
\[ K \textup{ is fibered } \Rightarrow \ a_{g(K)} = \pm 1 , \]
knot Floer homology actually detects fiberedness \cite{Ghiggini, Ni}:
\[ K \textup{ is fibered } \Leftrightarrow \HFKhat(K, g(K)) = \F. \]

\subsection{The knot Floer complex}
We now modify the constructions in Section \ref{sec:HF} to the case of doubly pointed Heegaard diagrams in order to define knot invariants. As mentioned above, for simplicity, we restrict ourselves to knots in $S^3$. (With mild modifications, the constructions described here apply to any null-homologous knot in a rational homology sphere.) Some of this material comes from \cite[Section 10]{OSintro}; see \cite{OSknots} for more details and proofs. Many of our conventions and notations come from \cite{Zemkeabsgr}; see especially \cite[Section 1.5]{Zemkeabsgr}. Knot Floer homology was independently defined by Rasmussen in \cite{RasmussenThesis}.

Recall that in the differential of the chain complex $\CFm$, the variable $U$ recorded information about the basepoint $w$. Now that we have two basepoints, we will work with the two variable polynomial ring $\F[U, V]$.
We endow this ring with a bigrading $\gr=(\gr_U, \gr_V)$. We call $\gr_U$ the \emph{$U$-grading} and $\gr_V$ the \emph{$V$-grading}. The variables $U$ and $V$ have grading 
\[ \gr(U)= (-2, 0) \qquad \textup{ and } \qquad \gr(V) = (0, -2). \]
It will often be convenient to consider the following linear combination of $\gr_U$ and $\gr_V$,
\[ A = \frac{1}{2}(\gr_U - \gr_V), \]
called the \emph{Alexander grading}. Note that $A(U) = -1$ and $A(V) = 1$.

Let $\cH$ be a doubly pointed Heegaard diagram for a knot $K \subset S^3$. Let $\CFKR(\cH)$ be the free $\F[U, V]$-module generated by $\bbT_\alpha \cap \bbT_\beta$. This module is relatively bigraded as follows. Let $\phi \in \pi_2(\bfx, \bfy)$ and define
\begin{align*}
	 \gr_U(\bfx) - \gr_U (\bfy) &= \mu(\phi) -2n_w(\phi) \\
	 \gr_V(\bfx) - \gr_V (\bfy) &= \mu(\phi) -2n_z(\phi).
\end{align*}
By \cite[Proposition 7.5]{OS3manifolds2} (see also \cite[Section 5.1]{Zemkeabsgr}), this relative grading is well-defined. The relative gradings $\gr_U$ and $\gr_V$ can be lifted to absolute $\Z$-gradings, using the absolute grading on $\HFm(S^3)$; we describe this process below.

The differential $\d \co \CFKR(\cH) \to \CFKR(\cH)$ is defined to be
\[ \d \bfx = \sum_{\bfy \in \bbT_\alpha \cap \bbT_\beta} \sum_{\substack{\phi \in \pi_2(\bfx, \bfy) \\ \mu(\phi) = 1}} \# \cMhat(\phi) U^{n_w(\phi)} V^{n_z(\phi)} \; \bfy, \]
and is extended to all elements of $\CFKR(\cH)$ by $\F[U, V]$-linearity. Note that the differential preserves the Alexander grading. 

Setting $V=1$ and forgetting $\gr_V$ (that is, only considering the grading $\gr_U$), we recover $\CFm(S^3)$, whose homology is isomorphic to $\F[U]_{(0)}$, where the subscript $(0)$ now denotes $\gr_U(1)$; this determines the absolute $U$-grading. Note that setting $V=1$ corresponds to forgetting the $z$-basepoint. To determine the absolute $V$-grading, we simply reverse the roles of $U$ and $V$ in the above construction. That is, we set $U=1$, forget $\gr_U$, and only consider $\gr_V$, recovering $\CFm(S^3)$, whose homology is isomorphic to $\F[V]$ where $\gr_V(1) = 0$. This corresponds to forgetting the $w$-basepoint.

\begin{theorem}[{\cite[Theorem 3.1]{OSknots}}]
Let $\cH$ be a doubly pointed Heegaard diagram for a knot $K \subset S^3$. The chain homotopy type of $\CFKR(\cH)$ is an invariant of $K \subset S^3$.
\end{theorem}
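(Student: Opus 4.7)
The plan mirrors the invariance proof for $\HFm(\cH,\mfs)$ (Theorem \ref{thm:isotype}) in Section \ref{sec:HF}, but refined to keep track of the second basepoint. By the doubly pointed analog of Theorem \ref{thm:pointedHeegaardmoves} quoted just before the statement, any two doubly pointed Heegaard diagrams for $K\subset S^3$ are related by a finite sequence of doubly pointed isotopies, doubly pointed handleslides, and stabilizations (plus changes of the generic almost-complex structure and path of nearly symmetric ones). Thus it suffices to show that each of these moves induces an $\F[U,V]$-equivariant chain homotopy equivalence between the associated $\CFKR$ complexes, in a manner compatible with both gradings $\gr_U$ and $\gr_V$.

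First I would handle changes of almost complex structure and pointed isotopies that do not cross either basepoint by the standard continuation map construction: counting holomorphic disks for a generic one-parameter family of data yields a chain map $\Phi\from \CFKR(\cH_0)\to \CFKR(\cH_1)$, and a standard two-parameter family argument produces a chain homotopy inverse. The point is that, since the isotopies and the family of data avoid both $w$ and $z$, the intersection numbers $n_w(\phi)$ and $n_z(\phi)$ are well defined for each disk in the family, so the map $\Phi$ counts disks with weight $U^{n_w(\phi)}V^{n_z(\phi)}$ and is automatically $\F[U,V]$-linear; the usual chain-homotopy identities go through verbatim once one simply carries along both exponents in parallel.

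Next I would treat doubly pointed handleslides. Following the Heegaard triple strategy used for $\HFm$, a handleslide on, say, the $\beta$-curves produces a third set of attaching circles $\boldsymbol{\beta}'$ and a Heegaard triple $(\Sigma,\bfalpha,\bfbeta,\bfbeta')$ in which the triply periodic domains can be analyzed. Counting pseudoholomorphic triangles with weight $U^{n_w}V^{n_z}$ defines a chain map $\Psi\from \CFKR(\Sigma,\bfalpha,\bfbeta)\to \CFKR(\Sigma,\bfalpha,\bfbeta')$ over $\F[U,V]$. The doubly pointed hypothesis, which forbids $w$ and $z$ from lying in the pair of pants of the handleslide, guarantees that the canonical top generator $\Theta\in \CFKR(\Sigma,\bfbeta,\bfbeta')$ has Alexander grading $0$ and is a cycle with no contribution from either basepoint, so $\Psi$ preserves the Alexander grading and reduces modulo $V=1$ (respectively $U=1$) to the known handleslide invariance map for $\CFm$. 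A standard associativity argument using a Heegaard quadruple $(\Sigma,\bfalpha,\bfbeta,\bfbeta',\bfbeta'')$ exhibits $\Psi$ as an $\F[U,V]$-equivariant chain homotopy equivalence.

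Finally I would address stabilization: adding a new handle $T^2$ connect-summed into $\Sigma$ with a pair $(\alpha_{g+1},\beta_{g+1})$ meeting in a single point $c$ produces a canonical bijection $\bbT_\alpha\cap \bbT_\beta \longleftrightarrow \bbT_{\alpha'}\cap \bbT_{\beta'}$ by $\bfx\mapsto \bfx\times\{c\}$, and a standard degeneration/gluing argument identifies the moduli spaces of holomorphic disks on the stabilized diagram with those on the original, without changing either basepoint count; hence $\CFKR$ is literally isomorphic as a bigraded $\F[U,V]$-module chain complex under stabilization. The main obstacle throughout is the handleslide step: verifying that the triangle maps really do count with matching $(n_w,n_z)$-weights and that the homotopies produced by the quadruple argument are themselves $\F[U,V]$-equivariant; once this is carried out, assembling the pieces along a sequence of doubly pointed Heegaard moves yields the asserted invariance of the chain homotopy type. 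Independence from the absolute grading normalization uses the same recipe as in Section \ref{sec:HF}, specifying $\gr_U$ via the setting $V=1$ (which forgets $z$) and recovering the absolute $\F[U]$-grading on $\CFm(S^3)$, and symmetrically for $\gr_V$.
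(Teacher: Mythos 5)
Your outline is correct and is essentially the argument of the cited source: these notes do not prove the theorem themselves (it is quoted from Ozsv\'ath--Szab\'o), but the strategy they sketch for the 3-manifold case --- invariance under the relevant pointed Heegaard moves and changes of almost complex structure via continuation maps, holomorphic triangle counts for handleslides, and a neck-stretching identification for stabilization --- is exactly what you have refined to track both basepoint multiplicities $n_w$ and $n_z$. The one point you could flag more explicitly is admissibility of the Heegaard (multi-)diagrams, which is needed for the sums defining the differentials and triangle maps over $\F[U,V]$ to be finite, but the notes themselves suppress this for rational homology spheres, so your sketch matches theirs in level of detail.
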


\noindent Note that \cite[Theorem 3.1]{OSknots} is phrased in terms of filtered chain complexes; see \cite[Section 1.5]{Zemkeabsgr} for a description of the translation between filtered chain complexes and modules over $\F[U, V]$. We will often abuse notation and write $\CFKR(K)$ rather than $\CFKR(\cH)$.

\begin{example}\label{ex:LHT1}
Figure \ref{fig:HDtrefoil} shows a doubly pointed Heegaard diagram for the left-handed trefoil.
We have
\begin{align*}
	\d a &= Ub \\
	\d b &= 0 \\
	\d c &= Vb.	
\end{align*}
Setting $V=1$, we see that the homology, which is isomorphic to $\F[U]$, is generated by $[a+Uc]$, implying that $\gr_U(a) = \gr_U(Uc) = 0$. Setting $U=1$, we see that the homology, which is isomorphic to $\F[V]$, is generated by $[c+Va]$, implying that $\gr_V(c) = \gr_V(Va) = 0$. It follows that the generators $a, b, c$ have the following gradings:

\begin{center}
\begin{tabular}{*{10}{@{\hspace{10pt}}c}}
\hline
& && $\gr_U$ && $\gr_V$ && $A$  & \\
\hline
& $a$ && $0$ && $2$ && $-1$\\ 
& $b$ && $1$ && $1$ && $0$\\ 
& $c$ && $2$ && $0$ && $1$\\ 
\hline
\end{tabular}
\end{center}
Then $H_*(\CFKR(-T_{2,3})) \cong \F[U,V]_{(0,0)} \oplus \F_{(1,1)}$, where the subscript denotes $\gr=(\gr_U, \gr_V)$ of $1$. The $\F[U,V]$ summand is generated by $Va+Uc$ and the $\F$ summand by $b$.
\end{example}

\begin{exercise}\label{exer:flipUV}
Let $\cH=(\Sigma, \bfalpha, \bfbeta, w, z)$ be a doubly pointed Heegaard diagram for $K \subset S^3$. 
\begin{enumerate}
	\item\label{it:localize} Let 
	\[ (\CFKR(\cH) \otimes_{\F[V]} \F[V, V^{-1}], s)\] 
	denote the summand of $\CFKR(\cH) \otimes_{\F[V]} \F[V, V^{-1}]$ in Alexander grading $s$, thought of as an $\F[W]$-module, where $W=UV$. Note that multiplication by $UV$ preserves the Alexander grading on $\CFKR(\cH)$. Let $\CFm(\cH) = \CFm(\Sigma, \bfalpha, \bfbeta, w)$. 
	Show that there is an isomorphism of $\F[W]$-modules
	\[ (\CFKR(\cH) \otimes_{\F[V]} \F[V, V^{-1}], s) \cong \CFm(\cH), \] 
	where the right-hand side is viewed as a module over $\F[W]$, rather than $\F[U]$. (Hint: Consider the map $\CFm(\cH) \to (\CFKR(\cH) \otimes_{\F[V]} \F[V, V^{-1}], s)$ given by $W^n\bfx \mapsto U^nV^{n+s-A(\bfx)} \bfx$.)
	Conclude that 
	\[ H_*(\CFKR(\cH) \otimes_{\F[V]} \F[V, V^{-1}]) \cong \F[U, V, V^{-1}]. \] 

	\item Repeat part \eqref{it:localize} reversing the roles of $U$ and $V$.
\end{enumerate}
\end{exercise}

The knot Floer complex behaves nicely under connected sum, reversal, and mirroring. By \cite[Theorem 7.1]{OSknots}, we have that
\[ \CFKR(K_1 \# K_2) \simeq \CFKR(K_1) \otimes_{\F[U,V]} \CFKR(K_2). \] 
Let $K^r$ denote the reverse of $K$ and $mK$ the mirror. By \cite[Section 3]{OSknots}, we have that 
\begin{equation}\label{eq:mirror}
	\CFKR(mK) \simeq \CFKR(K)^*,
\end{equation}
where $C^* = \Hom_{\F[U,V]}(C, \F[U,V])$ and
\begin{equation}\label{eq:reverse}
	\CFKR(K^r) \simeq \CFKR(K).
\end{equation}

\begin{exercise}\label{exer:reverse}
Let $\cH = (\Sigma, \bfalpha, \bfbeta, w, z)$ be a doubly pointed Heegaard diagram for $K \subset S^3$. Show that $\cH_1 = (\Sigma, \bfalpha, \bfbeta, z, w)$ and $\cH_2 = (-\Sigma, \bfbeta, \bfalpha, w, z)$ are both diagrams for $K^r$. Show that $\CFKR(\cH)$ is isomorphic to $\CFKR(\cH_2)$ (cf. Exercise \ref{exer:conj}). Conclude that Equation \eqref{eq:reverse} holds.
\end{exercise}

\begin{remark}\label{rk:swapUV}
It follows from Equation \eqref{eq:reverse} and Exercise \ref{exer:reverse} that $\CFKR(K)$ is chain homotopy equivalent to complex $C'$ obtained from $\CFKR(K)$ by exchanging the roles of $U$ and $V$. (Note that one should then also exchange the roles of $\gr_U$ and $\gr_V$.)
\end{remark}

\begin{exercise}
Compute $\CFKR(T_{2,3})$ two ways: from a doubly pointed Heegaard diagram and by applying Equation \eqref{eq:mirror} to Example \ref{ex:LHT1}, and confirm that the two answers agree.
\end{exercise}

\begin{exercise}\label{exer:CFKRcomp}
Compute $\CFKR$ for the figure eight knot and for the torus knot $T_{3,4}$ using the doubly pointed Heegaard diagrams from Exercises \ref{exer:41} and \ref{exer:Tpq}.
\end{exercise}

\subsection{Algebraic variations}
As usual, let $\cH$ be a doubly pointed Heegaard diagram for a knot $K \subset S^3$. There are several algebraic modifications we may make to $\CFKR(\cH)$. Since the chain homotopy type of $\CFKR(\cH)$ is an invariant of the knot $K$, it follows that these algebraic modifications also yield knot invariants.

The first modification we consider is setting both $U = 0$ and $V = 0$, resulting in a bigraded chain complex $\CFK_\F(\cH)$ over the field $\F$. Setting $U=V=0$ is equivalent to requiring that $n_w(\phi) = n_z(\phi) = 0$ in the definition of the differential. We denote the homology of $\CFK_\F(\cH)$ by $\HFKhat(K)$, which is a bigraded vector space.

It is common to use $\gr_U$ and $A$ as the bigrading on $\HFKhat(K)$. (Of course, this is the same information as $\gr_U$ and $\gr_V$, since $A = \frac{1}{2}(\gr_U-\gr_V)$.) We write $\HFKhat_{m}(K, s)$ to denote the summand of $\HFKhat(K)$ with $\gr_U=m$ and $A=s$. The grading $\gr_U$ is often called the \emph{Maslov grading}.

\begin{example}
Setting $U =V = 0$ in Example \ref{ex:LHT1} results in 
\[ \d a = \d b = \d c = 0.\]
Thus, $\HFKhat(K)$ has dimension 3, generated by $a, b,$ and $c$, with gradings given in the table in Example \ref{ex:LHT1}. That is, 
\begin{align*}
	\HFKhat_{m}(K, s) = 		\begin{cases}
			\F \quad & \text{if } (m,s) = (0, -1), (1, 0), \text{ or } (2, 1) \\
			0 \quad & \text{otherwise}.
		\end{cases}
\end{align*}
\end{example}

\begin{theorem}[{\cite[Equation (1)]{OSknots}}]\label{thm:HFKcat}
The graded Euler characteristic of $\HFKhat(K)$ is equal to the Alexander polynomial of $K$:
\[ \Delta_K(t) = \sum_{m, s} (-1)^m \dim \HFKhat_{m}(K,s) \; t^s. \]
\end{theorem}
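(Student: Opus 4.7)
The plan is to reduce the Euler characteristic to a signed count of generators and then match this count with a known state sum for $\Delta_K(t)$.

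First I would express $\chi(\HFKhat(K))$ at the chain level. Since the differential lowers the Maslov grading $\gr_U$ by one, and since by the analog of Remark \ref{rk:Z2grading} the mod $2$ reduction of $\gr_U$ agrees with the relative $\Z/2$ grading determined by local intersection numbers of $\bbT_\alpha$ and $\bbT_\beta$ in $\Sym^g(\Sigma)$, we have
\[
\chi(\HFKhat(\cH)) \;=\; \sum_{\bfx \in \bbT_\alpha \cap \bbT_\beta} \varepsilon(\bfx)\, t^{A(\bfx)} \in \Z[t,t^{-1}]/\pm t^k,
\]
where $\varepsilon(\bfx) \in \{\pm 1\}$ is the local intersection sign (after choosing orientations on $\bbT_\alpha$ and $\bbT_\beta$) and $A(\bfx)$ is the Alexander grading. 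The differential preserves $A$, so this graded Euler characteristic is invariant under taking homology, and since it is independent of the auxiliary choices up to an overall sign and power of $t$ (which is pinned down by the symmetrization of the Alexander polynomial), the calculation may be carried out in any convenient doubly pointed Heegaard diagram.

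Next I would select the Heegaard diagram built from a knot projection $D$ with $c$ crossings, as described around Figure \ref{fig:trefoilgenus4}: $\Sigma$ has genus $c+1$, there is one $\beta$-circle per bounded region of the projection and one per crossing plus one meridional $\alpha$-circle $\alpha_{c+1}$ separating the basepoints $w$ and $z$. The key combinatorial step is to set up a bijection between $\bbT_\alpha \cap \bbT_\beta$ and the Kauffman states of the projection $D$: a state assigns to each crossing one of the four adjacent regions, subject to the rule that each bounded region gets used by exactly one crossing. The local picture at a crossing (Figure \ref{fig:crossing}) shows that the intersections of the crossing's $\alpha$-circle with the surrounding $\beta$-circles are naturally in bijection with the four quadrants at that crossing, and the $\alpha_{c+1}$ component of $\bfx$ contributes a unique intersection with the meridional $\beta$-circle.

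The heart of the argument is then to verify that under this bijection the Alexander grading $A(\bfx)$ equals the Kauffman weight $\sum_{\text{crossings}} a(c,\bfx)$, and the intersection sign $\varepsilon(\bfx)$ equals the Kauffman sign $\prod_{\text{crossings}} b(c,\bfx)$. For the Alexander grading, I would compute relative Alexander gradings by connecting two states that differ at a single crossing via an embedded bigon or rectangle in $\Sigma$ whose boundary lies on $\bfalpha \cup \bfbeta$ and whose multiplicities at $w$ and $z$ differ by exactly the Kauffman weight difference; the formula $A(\bfx) - A(\bfy) = n_z(\phi) - n_w(\phi)$ then gives the matching. For the sign, I would orient the $\alpha$- and $\beta$-circles compatibly with the orientation of $K$ (the meridional $\alpha_{c+1}$ inherits its orientation from the crossing over the basepoints, and the remaining $\alpha$-circles can be oriented consistently using the crossing-sign data), and check that the four quadrants at a crossing have local intersection signs matching the Kauffman signs.

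Once the intersection sign and Alexander grading of each generator match the Kauffman weight, Kauffman's classical state sum formula gives
\[
\chi(\HFKhat(K)) = \sum_{\bfx} \varepsilon(\bfx)\, t^{A(\bfx)} = \Delta_K(t),
\]
up to normalization; the symmetry $\HFKhat_m(K,s) \cong \HFKhat_{m-2s}(K,-s)$ (which follows from Equation \eqref{eq:mirror} applied to the identification of $K$ with its own orientation reverse plus swap of $w$ and $z$) pins down the normalization so that the right-hand side is exactly the symmetrized Alexander polynomial. The main obstacle is the bookkeeping in the second paragraph: matching local intersection signs in $\Sym^g(\Sigma)$ to Kauffman's sign conventions, since the symmetric product orientation and the chosen orientations of $\bbT_\alpha, \bbT_\beta$ must be tracked through every crossing type; everything else is either formal or follows from invariance of $\CFKR(\cH)$ under the choice of doubly pointed Heegaard diagram.
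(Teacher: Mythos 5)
Your proposal is correct and follows essentially the same route the paper sketches: it reduces the graded Euler characteristic to a signed count of generators in the doubly pointed Heegaard diagram built from a knot projection, identifies those generators with the Kauffman states of the projection, and matches the Alexander grading and intersection sign with the Kauffman weight and sign, exactly as indicated in the discussion following the theorem (with the details deferred to \cite[Sections 11--13]{OSintro}). The bookkeeping you flag as the main obstacle---tracking orientations of $\bbT_\alpha$, $\bbT_\beta$ through each crossing type and pinning down the normalization via the symmetry of $\HFKhat$---is indeed where the real work lies, and your outline of it is consistent with the cited argument.
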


Recall from \cite{Kauffman} (see also \cite[Theorem 11.3]{OSintro}) that the Alexander polynomial of $K$ can be computed in terms of the Kauffman states of a diagram for $K$. Note that the Kauffman states of the left diagram in Figure \ref{fig:trefoilgenus4} are in bijection with the Heegaard Floer generators of the right diagram. This observation, together with a computation of the bigradings, is at the heart of the proof of Theorem \ref{thm:HFKcat}; see \cite[Sections 11-13]{OSintro} for details.

Another algebraic modification is to set a single variable, say $V$, equal to zero, resulting in a chain complex $\CFK_{\F[U]}(\cH)$ over the PID $\F[U]$. This corresponds to requiring that $n_z(\phi)=0$ in the definition of the differential. The homology of $\CFK_{\F[U]}$ is a $\F[U]$-module, denoted $\HFKm(K)$. As a finitely generated graded module over a PID, $\HFKm(K)$ is isomorphic to a direct sum of free summands and $U$-torsion summands as in Equation \eqref{eq:fingenmod}.

It is common to view $\HFKm(K)$ as bigraded by $\gr_U$ and $A$. The action of $U$ lowers $\gr_U$ by 2 and $A$ by $1$.

\begin{exercise}
Let $K \subset S^3$. Prove that $\HFKm(K) \otimes_\F \F[U, U^{-1}] \cong H_*(\CFK_{\F[U]}(K) \otimes_{\F[U]} \F[U, U^{-1}]) \cong \HFhat(S^3) \otimes_\F \F[U, U^{-1}] \cong \F[U, U^{-1}]$ and conclude that there is a unique free summand in $\HFKm(K)$. 
\end{exercise}

\begin{example}
Setting $V=0$ in Example \ref{ex:LHT1} results in the free $\F[U]$-module generated by $a, b,$ and $c$ with differential
\begin{align*}
	\d a &= Ub \\
	\d b &= 0 \\
	\d c &= 0.	
\end{align*}
Hence
\[ \HFKm(K) \cong \F[U]_{(2)} \oplus \F_{(1)} \]
where $[c]$ is a generator for the $\F[U]$-summand, $[b]$ is a generator for the $\F$-summand, and the subscript denotes $\gr_U$.
\end{example}

There are other algebraic modifications one may consider, such as setting $U^n=0$ or $UV=0$.

\subsection{Computations}
How does one compute $\CFKR$ in practice? For small crossing knots, $\CFKR$ can be computed via grid diagrams \cite{MOS, MOST}; see \cite{BaldwinGillam} for a table of $\HFKhat$ for knots up to 12 crossings computed using grid diagrams. For an excellent textbook on the subject of grid diagrams, see \cite{OSSgrid}. The invariant $\CFK_{\F[U,V]/(UV=0)}$ can be algorithmically computed following \cite{OSborderedalgebras} (see also \cite{OSmatchings}); such computations are significantly faster than computations with grid diagrams. At the time of writing, a computer implementation of this algorithm is available at \url{https://web.math.princeton.edu/~szabo/HFKcalc.html}.

For certain special families of knots, we can compute $\CFKR$  directly from the definition (in the case of $(1,1)$-knots, described below) or from other easier to compute knot invariants such as the Alexander polynomial and signature (in the case of alternating knots and knots admitting L-space surgeries).

A knot in $S^3$ that admits a genus 1 doubly pointed Heegaard diagram is called a \emph{$(1, 1)$-knot}. For a $(1,1)$-knot $K$, the complex $\CFKR(K)$ can be computed by counting embedded disks in the universal cover of $\Sigma=T^2$, similar to Example \ref{ex:LHT1}; see \cite{GodaMM}. 

If $K$ is alternating (or more generally, quasi-alternating; see \cite[Definition 3.1]{OSdbc}), then \cite[Theorem 1.3]{OSalternating} states that $\HFKhat(K)$ is completely determined by the Alexander polynomial and signature of $K$. Moreover, \cite[Lemma 7]{Petkovacables} (which is completely algebraic) states that if $K$ is alternating, then $\HFKhat(K)$ completely determines the chain homotopy type of $\CFKR(K)$.

\begin{exercise}
Let $K$ be an alternating knot. A key ingredient in the proof of \cite[Theorem 1.3]{OSalternating} is that if $\HFKhat_m(K,s) \neq 0$, then $m=s+\frac{\sigma(K)}{2}$, where $\sigma(K)$ denotes the signature of $K$. (If $\HFKhat(K)$ is supported on a single diagonal with respect to the Maslov and Alexander gradings, we say $K$ is \emph{homologically thin}.) Show that this fact combined with Theorem \ref{thm:HFKcat} completely determines the bigraded vector space $\HFKhat(K)$ when $K$ is an alternating knot.
\end{exercise}

If $K$ admits a lens space surgery (or more generally, an L-space surgery), it follows from \cite[Theorem 1.2]{OSlens}  that $\CFKR(K)$ is completely determined by the Alexander polynomial of $K$, as follows. If $K$ admits an L-space surgery, then the non-zero coefficients in $\Delta_K(t)$ are all $\pm 1$ and they alternate in sign \cite[Corollary 1.3]{OSlens}. Let
\[ \Delta_K(t) = \sum_{i=0}^n (-1)^i t^{a_i}, \]
for some decreasing sequence $(a_i)$ and even $n$. Let $b_i = a_{i} - a_{i-1}$. If $K$ admits a positive L-space surgery, then $\CFKR(K)$ is generated by $x_0, \dots, x_n$ where for $i$ odd,
\[ \d x_i = U^{b_i} x_{i-1} + V^{b_{i+1}} x_{i+1}, \]
and for $i$ even, $\d x_i = 0$. The absolute grading is determined by $\gr_U(x_0) = 0$ and $\gr_V(x_n) = 0$. (Note that there is no loss of generality in considering only positive L-space surgeries. Indeed, if $K$ admits a negative L-space surgery, then $mK$ admits a positive L-space surgery and one can apply Equation \eqref{eq:mirror}.)

\begin{exercise}
Compute $\CFKR(T_{3,4})$ using the fact that $T_{3,4}$ admits a positive L-space surgery and the above description of $\CFKR(K)$ in terms $\Delta_K(t)$ for knots admitting L-space surgeries. Compare with Exercise \ref{exer:CFKRcomp}.
\end{exercise}

\begin{exercise}
Suppose $K$ admits a positive L-space surgery. Express $\HFKhat(K)$ in terms of $\Delta_K(t)$, and verify that $\HFKhat(K)$ satisfies Theorem \ref{thm:HFKcat}.
\end{exercise}

\noindent (For the relationship between $(1, 1)$-knots and L-space knots, see \cite{GreeneLV}.) 


\section{Heegaard Floer homology of knot surgery}\label{sec:surgery}

\subsection{Large surgery}
In this section, we discuss the relationship between the knot Floer complex $\CFKR(K)$ and $\HFm(S^3_n(K))$, where $S^3_n(K)$ denotes $n$-surgery on $K \subset S^3$.

We begin with some observations about $\CFKR(K)$, which is a chain complex over $\F[U, V]$. Let $W=UV$. Note that multiplication by $W$ preserves the Alexander grading. Hence as a chain complex over $\F[W]$, the complex $\CFKR(K)$ splits as a direct sum over the Alexander grading. (However, note that neither multiplication by $U$ nor by $V$ respects this splitting.)

Following \cite[Section 4]{OSknots}, one may identify $\spinc$-structures on $S^3_n(K)$ with $\Z/n\Z$. Recall that $\HFm(S^3_n(K))$ is a module over a polynomial ring in a single variable.

\begin{theorem}[{\cite[Theorem 4.4]{OSknots}, cf. \cite[Section 4]{RasmussenThesis}}]\label{thm:HFsurgery}
Let $n \geq 2g(K)-1$ and $|s| \leq \lfloor \frac{n}{2} \rfloor$. Then 
\[ \HFm(S^3_n(K), [s]) \cong H_*(\CFKR(K, s)) \]
as relatively $\Z$-graded modules over a polynomial ring in a single variable $W$. That is, on the left-hand side, $W =U$ while on the right-hand side, $W = UV$. The relative grading on the right-hand side may be taken to be either $\gr_U$ or $\gr_V$.
\end{theorem}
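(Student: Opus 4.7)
The plan is to construct a specific pointed Heegaard diagram $\cH_n$ for $S^3_n(K)$ starting from a doubly pointed Heegaard diagram $\cH$ for $K$, and then exhibit an explicit chain-level isomorphism between $\CFm(\cH_n, [s])$ and $\CFKR(\cH, s)$ (the latter viewed as a chain complex over $\F[W]$ with $W = UV$). First I would choose $\cH = (\Sigma, \bfalpha, \bfbeta, w, z)$ in which one $\beta$-circle $\mu$ is a meridian of $K$, with $w$ and $z$ adjacent to $\mu$ on opposite sides, and then replace $\mu$ by an $n$-framed longitude $\lambda_n$ drawn with a ``winding region'' that oscillates $n$ times around $\mu$. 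The result $\cH_n = (\Sigma, \bfalpha, \bfbeta', w)$ is a pointed Heegaard diagram for $S^3_n(K)$, and each intersection of some $\alpha_i$ with $\mu$ produces $n$ nearby intersections of $\alpha_i$ with $\lambda_n$ in the winding region, so each generator of $\CFKR(\cH)$ gives rise to $n$ generators of $\CFm(\cH_n)$.

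Next I would check that these $n$ generators are distinguished by their $\spinc$-structure classes in $\spinc(S^3_n(K)) = \Z/n\Z$, and that the labeling can be chosen so that a generator sitting in $\spinc$-structure $[s]$ corresponds to a generator $\bfx$ in $\CFKR(\cH)$ with Alexander grading $A(\bfx) = s$ whenever $|s| \leq n/2$. Since $\CFKR(K)$ is supported in Alexander gradings $|s| \leq g(K)$ by Equation \eqref{eq:genus}, the hypothesis $n \geq 2g(K)-1$ guarantees that every Alexander grading occurring in $\CFKR(K)$ has a unique representative in the range $|s| \leq n/2$, so generators match bijectively.

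The heart of the argument is matching differentials. For a Maslov index one disk $\phi \in \pi_2(\bfx, \bfx')$ in $\cH$ with $A(\bfx) = A(\bfx') = s$, one has $n_w(\phi) = n_z(\phi) = k$ (since $n_w - n_z$ computes the Alexander-grading change), so $\phi$ contributes $U^k V^k \bfx' = W^k \bfx'$ to $\partial \bfx$ in $\CFKR(\cH, s)$. I would establish that this matches a unique Maslov index one disk $\phi'$ in $\cH_n$ contributing $U^k \bfx'$ to $\partial \bfx$ in $\CFm(\cH_n, [s])$, via a neck-stretching or direct-domain argument in the winding region: for $n \geq 2g(K)-1$, every Maslov index one disk in $\cH_n$ is either supported essentially away from the winding region (and corresponds to a disk in $\cH$) or would require an Alexander grading exceeding $g(K)$ on the knot side, which is ruled out. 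This gives the chain isomorphism, with $U$ on the surgery side identified with $W = UV$ on the knot side; the relative Maslov grading on $\CFm(\cH_n, [s])$ then agrees with $\gr_U$ on $\CFKR(\cH, s)$, and equivalently with $\gr_V$, since these differ by the constant $2s$ on Alexander grading $s$. The main obstacle is precisely this disk-correspondence step: one must carefully rule out ``exotic'' holomorphic disks in $\cH_n$ that have no counterpart in $\cH$, and in particular control contributions from periodic domains involving $\lambda_n$. It is here that the sharp threshold $n \geq 2g(K)-1$ plays the essential role, turning the Alexander-grading bound $|A| \leq g(K)$ into the combinatorial constraint needed to exclude these contributions.
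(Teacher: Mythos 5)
The paper does not actually prove this theorem: it gives only a two-sentence sketch (the ``nearest point'' correspondence of generators obtained by replacing the meridian with an $n$-framed longitude, the relation between $\spinc$ structures and the Alexander grading, and a count of holomorphic triangles in a Heegaard triple) and defers to \cite[Section 4]{OSknots}. Your overall setup --- wind the longitude $n$ times in a winding region and compare the two diagrams --- matches the first part of that sketch. But where the notes point to a holomorphic \emph{triangle} count for the Heegaard triple relating the two diagrams, you propose a direct matching of Maslov index one disks; be aware that ``the domains correspond, so the counts correspond'' is not automatic and is precisely what the triangle map together with a neck-stretching/filtration argument is designed to control.

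The more serious problem is that your identification of generators and differentials is wrong, and this is a genuine gap. The complex $\CFKR(K,s)$ is \emph{not} generated over $\F[W]$ by the intersection points $\bfx$ with $A(\bfx)=s$: it is generated by $U^{\max(A(\bfx)-s,0)}V^{\max(s-A(\bfx),0)}\bfx$ as $\bfx$ ranges over \emph{all} of $\bbT_\alpha\cap\bbT_\beta$. Compare Example \ref{ex:+1LHT}, where $\CFKR(-T_{2,3},0)$ has the three $\F[W]$-generators $Va$, $b$, $Uc$, coming from generators of Alexander gradings $-1$, $0$, $1$. Correspondingly, on the surgery side each intersection point of the knot diagram contributes exactly one generator to $\CFm(\cH_n,[s])$ for \emph{every} class $[s]$: the $\spinc$ class of the $j$-th nearby longitude point depends on both $j$ and $A(\bfx)$, so fixing $[s]$ selects one of the $n$ nearby points for each $\bfx$, not only for those with $A(\bfx)=s$. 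Your differential-matching step inherits the same error: you restrict to disks with $n_w(\phi)=n_z(\phi)$, i.e.\ disks preserving the Alexander grading of the underlying intersection points, but the differential on $\CFKR(K,s)$ also contains terms $U^{n_w}V^{n_z}\bfy$ with $n_w\neq n_z$ (in the example, $\partial(Va)=Wb$ comes from a disk with $n_w=1$, $n_z=0$). As written, your construction computes the Alexander-homogeneous piece of the associated graded object rather than $\CFKR(K,s)$, and it already returns the wrong answer for $\HFm(S^3_{+1}(-T_{2,3}))$.
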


\begin{remark}
See \cite[Corollary 4.2]{OSknots} for the absolutely graded version of Theorem \ref{thm:HFsurgery}.
\end{remark}

\begin{example}\label{ex:+1LHT}
Let $Y=S^3_{+1}(-T_{2,3})$. (It follows from \cite[Proposition 3.1]{Moser} that $S^3_{+1}(-T_{2,3}) \cong -\Sigma(2,3,7)$.) We will use Theorem \ref{thm:HFsurgery} to compute $\HFm(Y)$. Since $Y$ is an integer homology sphere, there is a unique $\spinc$-structure on $Y$. From Example \ref{ex:LHT1}, we have that $\CFKR(-T_{2,3}, 0)$ is generated over $\F[W]$, where $W=UV$, by
\[ Va, b, Uc. \] 
The differential is given by
\begin{align*}
	\d (Va) &= W \cdot b \\
	\d b &= 0 \\
	\d (Uc) &= W \cdot b.
\end{align*}
Note that the elements $Va$ and $Uc$ are in the same relative grading, while the relative grading of $b$ is one greater than the relative grading of $Va$.
We have that 
\[ \HFm(Y) \cong H_*(\CFKR(-T_{2,3}, 0)) \cong \F[W]_{(0)} \oplus \F_{(1)}, \]
where the $ \F[W]$-summand is generated by $Va+Uc$ and the $\F$-summand is generated by $b$. (The absolute gradings are computed following \cite[Corollary 4.2]{OSknots}.)
\end{example}

\begin{example}
Let $Y=S^3_{+3}(-T_{2,3})$. By Theorem \ref{thm:HFsurgery}, 
\[ \HFm(Y, [s]) \cong H_*(\CFKR(-T_{2,3}, s) \]
for $s=-1, 0, 1$.
By Example \ref{ex:+1LHT}, we have 
\[ \HFm(Y, [0]) \cong  H_*(\CFKR(-T_{2,3}, 0)) \cong \F[W] \oplus \F. \] 
From Example \ref{ex:LHT1}, we have that $\CFKR(-T_{2,3}, -1)$ is generated over $\F[W]$ by
\[ a, Ub, U^2c. \] 
The differential is given by
\begin{align*}
	\d a &= Ub \\
	\d (Ub) &= 0 \\
	\d (U^2c) &= W \cdot Ub.
\end{align*}
Hence $H_*(\CFKR(-T_{2,3}, -1)) \cong \F[W]$, generated by $U^2 c + Wa$. Similarly, $H_*(\CFKR(-T_{2,3}, 1)) \cong \F[W]$, generated by $V^2 a + W c$; we leave this calculation to the reader. See Figure \ref{fig:-T23As}.
\end{example}

\begin{figure}
\subfigure[]{
\begin{tikzpicture}[scale=1.25]
	\node at (0, 0) (a) {\lab{a}};
	\node at (-1, 0) (Ub) {\lab{Ub}};
	\node at (-2, 0) (U2c) {\lab{U^2c}};
	\node at (-1, -1) (UVa) {\lab{UVa}};
	\node at (-2, -1) (U2Vb) {\lab{U^2Vb}};
	\node at (-3, -1) (U3Vc) {\lab{U^3Vc}};
	\node at (-2, -2) (U2V2a) {\lab{U^2V^2a}};
	\node at (-3, -2) (U3V2b) {\lab{U^3V^2b}};

	\draw[<-] (Ub) to node[]{} (a);
	\draw[<-] (U2Vb) to node[]{} (UVa);
	\draw[<-] (U3V2b) to node[]{} (U2V2a);
	\draw[<-] (U2Vb) to node[]{} (U2c);
	\draw[<-] (U3V2b) to node[]{} (U3Vc);
	
	\node at (-3.6, -2.3) (dots) {$\iddots$};
\end{tikzpicture}\label{subfig:a}
}
\hspace{15pt}
\subfigure[]{
\begin{tikzpicture}[scale=1.25]
	\node at (0, 0) (b) {\lab{b}};
	\node at (-1, -1) (UVb) {\lab{UVb}};
	\node at (-1, 0) (Uc) {\lab{Uc}};
	\node at (0, -1) (Va) {\lab{Va}};
	\node at (-2, -2) (U2V2b) {\lab{U^2V^2b}};
	\node at (-2, -1) (U2Vc) {\lab{U^2Vc}};
	\node at (-1, -2) (UV2a) {\lab{UV^2a}};
	
	\draw[<-] (UVb) to node[]{} (Va);
	\draw[<-] (UVb) to node[]{} (Uc);
	\draw[<-] (U2V2b) to node[]{} (UV2a);
	\draw[<-] (U2V2b) to node[]{} (U2Vc);
	
	\node at (-2.6, -2.3) (dots) {$\iddots$};
\end{tikzpicture}
}
\subfigure[]{
\begin{tikzpicture}[scale=1.25]
	\node at (0, 1) (c) {\lab{c}};
	\node at (0, 0) (Vb) {\lab{Vb}};
	\node at (-1, -1) (U2Vb) {\lab{U^2Vb}};
	\node at (-1, 0) (UVc) {\lab{UVc}};
	\node at (0, -1) (V2a) {\lab{V^2a}};
	\node at (-2, -1) (U2V2c) {\lab{U^2V^2c}};

	\draw[<-] (Vb) to node[]{} (c);
	\draw[<-] (U2Vb) to node[]{} (UVc);
	\draw[<-] (U2Vb) to node[]{} (V2a);
	
	\node at (-2.6, -1.3) (dots) {$\iddots$};
\end{tikzpicture}
}
  \vspace{8pt}\caption{Different Alexander graded summands of $\CFKR(-T_{2,3})$.  Top left, $A_{-1} = \CFKR(-T_{2,3}, -1)$. Top right, $A_{0} = \CFKR(-T_{2,3}, 0)$. Bottom, $A_{1} = \CFKR(-T_{2,3}, 1)$.}
    \label{fig:-T23As}
  \end{figure}
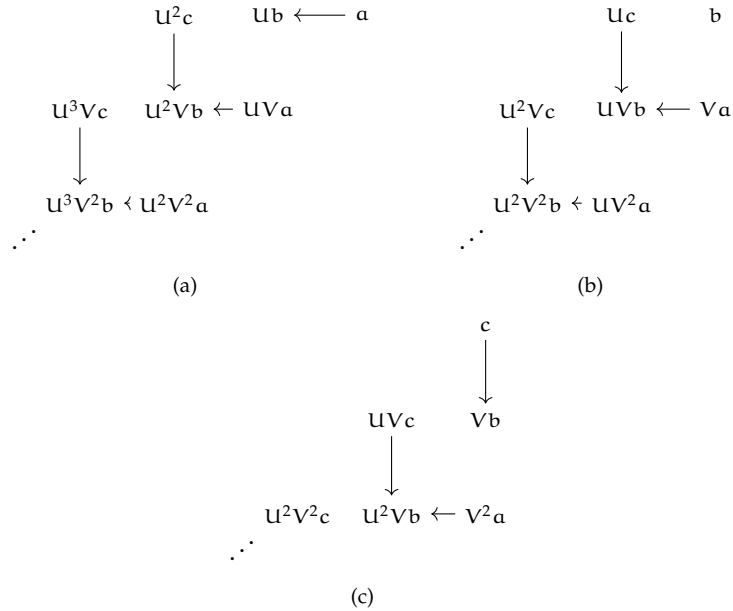

The proof of Theorem \ref{thm:HFsurgery} relies on relating the Heegaard diagrams for $(S^3, K)$ and $S^3_n(K)$; see, for example, Figures \ref{fig:trefoilgenus4} and \ref{fig:trefoilsurgery}. The rough idea is that for each generator of the Heegaard diagram for $(S^3, K)$ and for each $\spinc$-structure on $S^3_n(K)$, there is a canonical ``nearest'' generator obtained by replacing the intersection point on the meridian with a nearby intersection point on the $n$-framed longitude. The remainder of the proof relies on the relationship between $\spinc$-structures and the Alexander grading, as well as a count of holomorphic triangles in a Heegaard triple. See \cite[Section 4]{OSknots} for more details.

\begin{exercise}
Compute $\HFm(S^3_{+1}(4_1))$, where $4_1$ denotes the figure eight knot, using $\CFKR(4_1)$ as computed in Exercise \ref{exer:CFKRcomp}.
\end{exercise}

\begin{exercise}\label{exer:+5T34}
Compute $\HFm(S^3_{+5}(T_{3,4}))$ using $\CFKR(T_{3,4})$ as computed in Exercise \ref{exer:CFKRcomp}.
\end{exercise}

\subsection{Integer surgery}
Note that Theorem \ref{thm:HFsurgery} requires that  surgery coefficient $n$ to be greater than or equal to $2g(K)-1$. In \cite[Theorem 1.1]{OSinteger}, Ozsv\'ath-Szab\'o provide a recipe for computing the Heegaard Floer homology of any integer surgery along $K \subset S^3$; they improve this to a formula for rational surgery in \cite{OSrational}.

In these notes, we will work with the minus flavor, as in \cite[Theorem 1.1]{MOlink}. One disadvantage to working with the minus flavor is that one must work with completed coefficients (cf. Remark \ref{rk:completions}), that is, we work over the power series rings $\F[[U]]$ and $\F[[U, V]]$. To this end, for a pointed Heegaard diagram $\cH$ for $Y$, let
\[ \CFm_{\F[[U]]}(\cH) = \CFm(\cH) \otimes_{\F[U]} \F[[U]] \]
and let $\HFm_{\F[[U]]}(Y) = H_*(\CFm_{\F[[U]]}(\cH))$. Note that since $\F[[U]]$ is flat over $\F[U]$, we have that $\HFm_{\F[[U]]}(Y)  \cong \HFm(Y) \otimes_{\F[U]} \F[[U]]$.
Similarly, let
\[ \CFK_{\F[[U,V]]}(K) = \CFKR(K) \otimes_{\F[U,V]} \F[[U,V]].  \]
Let
\[ A_s = \CFK_{\F[[U,V]]}(K, s) \]
denote the part of $\CFK_{\F[[U,V]]}(K)$ in Alexander grading $s$. (Note that this is not quite a grading in the usual sense, as $\CFK_{\F[[U,V]]}(K)$ is a direct product rather than a direct sum of its homogenously graded pieces. We will abuse notation and still refer to $s$ as the Alexander grading, and similarly for $\gr_U$ and $\gr_V$.) We have that
\[ \CFK_{\F[[U,V]]}(K) \cong \prod_s A_s. \]

Let 
\[ B_s = (\CFK_{\F[U, V]}(K) \otimes_{\F[U,V]} \F[[U,V, V^{-1}]], s) \]
denote the part of $\CFK_{\F[U, V]}(K) \otimes_{\F[U,V]} \F[[U,V, V^{-1}]]$ in Alexander grading $s$. By Exercise \ref{exer:flipUV}, we have that $B_s \simeq \CFm_{\F[[W]]}(S^3)$ for every $s$, where $W=UV$. Note that 
\[ V \co B_s \to B_{s+1} \qquad \textup{ and } \qquad V^{-1} \co B_{s+1} \to B_s. \]
Since the composition of these two maps is the identity, we see that $V$ maps $B_s$ isomorphically to $B_{s+1}$. We have
\[ \CFK_{\F[U, V]}(K) \otimes_{\F[U,V]} \F[[U,V, V^{-1}]] \cong \prod_s B_s. \]
with $V, V^{-1},$ and $W$ acting as follows
\begin{equation*}
\label{pic:exact}
\begin{tikzpicture}[baseline=(current  bounding  box.center)]
\node(-2)at(-1.2,-0.2){};
\node(-1)at(-1.2, .2){};
\node(0)at(-1.4,0){${} \dots$};
\node(1)at(0,0){$B_{s-1}$};
\node(2)at (1.5, 0){$B_s$};
\node(3)at (3, 0){$B_{s+1}$};
\node(4)at(4.4,0){$\dots$};
\node(5)at(4.2,-0.2){};
\node(6)at(4.2, .2){};
\path[->, bend left=35](1)edge node[above]{\lab{V}}(2);
\path[<-, bend right=35](1)edge node[below]{\lab{V^{-1}}}(2);
\path[->, bend left=35](2)edge node[above]{\lab{V}}(3);
\path[<-, bend right=35](2)edge node[below]{\lab{V^{-1}}}(3);
\path[<-, bend right=35](3)edge node[below]{\lab{V^{-1}}}(5);
\path[->, bend left=35](3)edge node[above]{\lab{V}}(6);
\path[->, bend left=35](1)edge node[below]{\lab{V^{-1}}}(-2);
\path[<-, bend right=35](1)edge node[above]{\lab{V}}(-1);

\drawloop[->,stretch=1]{1}{50}{130} node[pos=0.5,above]{\lab{W}};
\drawloop[->,stretch=1]{2}{50}{130} node[pos=0.5,above]{\lab{W}};
\drawloop[->,stretch=1]{3}{50}{130} node[pos=0.5,above]{\lab{W}};
\end{tikzpicture}
\end{equation*}

By Remark \ref{rk:swapUV}, we have that $\CFK_{\F[U, V]}(K) \otimes_{\F[U,V]} \F[[U,V, V^{-1}]]$ is chain homotopy equivalent to $\CFK_{\F[U, V]}(K) \otimes_{\F[U,V]} \F[[U, U^{-1},V]]$ after exchanging the roles of $U$ and $V$. (Note that this chain homotopy equivalence reverses the Alexander grading.) Moreover, in any fixed Alexander grading, both complexes are homotopy equivalent to $B_s$, so let 
 \[ \phi_s  \co (\CFK_{\F[U,V]}(K) \otimes_{\F[U,V]} \F[[U, U^{-1},V]], s) \to (\CFK_{\F[U,V]}(K) \otimes_{\F[U,V]} \F[[U,V, V^{-1}]], s) \]
denote an Alexander grading-preserving chain homotopy equivalence between these two $\F[UV]$-modules.
Let $\phi = \prod_s \phi_s$. (Note that $\phi$ is not $\F[U,V]$-equivariant, although it is $\F[UV]$-equivariant.)

Let 
\[ \iota_V \co \CFK_{\F[[U,V]]}(K) \to \CFK_{\F[U, V]}(K) \otimes_{\F[U,V]} \F[[U,V, V^{-1}]] \]
and
\[ \iota_U \co \CFK_{\F[[U,V]]}(K) \to \CFK_{\F[U, V]}(K) \otimes_{\F[U,V]} \F[[U, U^{-1}, V]] \]
denote inclusion.
Moreover, since multiplication by $V$ is invertible in $\CFK_{\F[U,V]}(K) \otimes_{\F[U,V]} \F[[U,V, V^{-1}]]$, we have that 
\[ V^n \co \CFK_{\F[U, V]}(K) \otimes_{\F[U,V]} \F[[U,V, V^{-1}]] \to \CFK_{\F[U, V]}(K) \otimes_{\F[U,V]} \F[[U,V, V^{-1}]] \] 
is a (relatively graded) isomorphism. Note that $V^n|_{B_s} \co B_s \to B_{s+n}$.

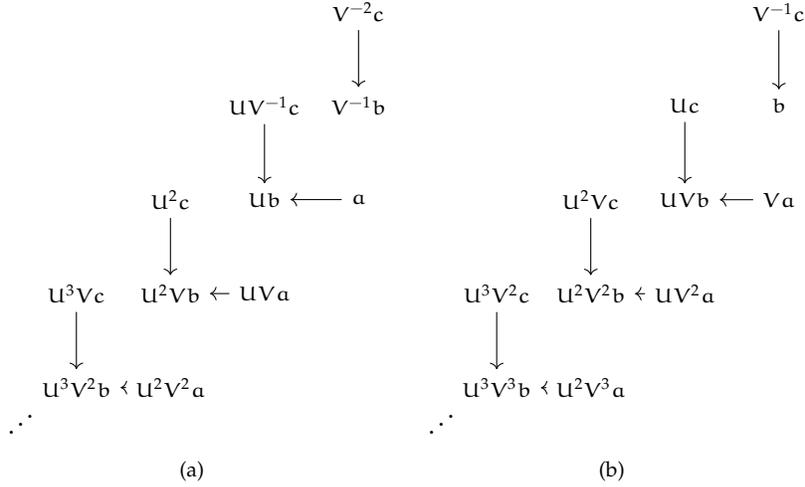
\begin{figure}
\subfigure[]{
\begin{tikzpicture}[scale=1.25]
	\node at (0, 2) (V-2c) {\lab{V^{-2}c}};	
	\node at (0, 1) (V-1b) {\lab{V^{-1}b}};	
	\node at (-1, 1) (UV-1c) {\lab{UV^{-1}c}};	
	\node at (0, 0) (a) {\lab{a}};
	\node at (-1, 0) (Ub) {\lab{Ub}};
	\node at (-2, 0) (U2c) {\lab{U^2c}};
	\node at (-1, -1) (UVa) {\lab{UVa}};
	\node at (-2, -1) (U2Vb) {\lab{U^2Vb}};
	\node at (-3, -1) (U3Vc) {\lab{U^3Vc}};
	\node at (-2, -2) (U2V2a) {\lab{U^2V^2a}};
	\node at (-3, -2) (U3V2b) {\lab{U^3V^2b}};

	\draw[->] (V-2c) to node[]{} (V-1b);
	\draw[<-] (Ub) to node[]{} (UV-1c);
	\draw[<-] (Ub) to node[]{} (a);
	\draw[<-] (U2Vb) to node[]{} (UVa);
	\draw[<-] (U3V2b) to node[]{} (U2V2a);
	\draw[<-] (U2Vb) to node[]{} (U2c);
	\draw[<-] (U3V2b) to node[]{} (U3Vc);
	
	\node at (-3.6, -2.3) (dots) {$\iddots$};
\end{tikzpicture}
}
\subfigure[]{
\begin{tikzpicture}[scale=1.25]
	\node at (0, 1) (V-1c) {\lab{V^{-1}c}};
	\node at (0, 0) (b) {\lab{b}};
	\node at (-1, -1) (UVb) {\lab{UVb}};
	\node at (-1, 0) (Uc) {\lab{Uc}};
	\node at (0, -1) (Va) {\lab{Va}};
	\node at (-2, -2) (U2V2b) {\lab{U^2V^2b}};
	\node at (-2, -1) (U2Vc) {\lab{U^2Vc}};
	\node at (-1, -2) (UV2a) {\lab{UV^2a}};
	\node at (-3, -3) (U3V3b) {\lab{U^3V^3b}};
	\node at (-3, -2) (U3V2c) {\lab{U^3V^2c}};
	\node at (-2, -3) (U2V3a) {\lab{U^2V^3a}};
	
	\draw[<-] (b) to node[]{} (V-1c);
	\draw[<-] (UVb) to node[]{} (Va);
	\draw[<-] (UVb) to node[]{} (Uc);
	\draw[<-] (U2V2b) to node[]{} (UV2a);
	\draw[<-] (U2V2b) to node[]{} (U2Vc);
	\draw[<-] (U3V3b) to node[]{} (U2V3a);
	\draw[<-] (U3V3b) to node[]{} (U3V2c);
	
	\node at (-3.6, -3.3) (dots) {$\iddots$};
\end{tikzpicture}
}

	
  \vspace{8pt}\caption{Different Alexander graded pieces of $\CFK_{\F[U,V]}(-T_{2,3}) \otimes_{\F[U,V]} \F[[U,V, V^{-1}]] \cong \prod_s B_s$. Left, $B_{-1}$. Right, $B_0$. Note that the two complexes are isomorphic, with the isomorphism provided by multiplication by $V^{\pm 1}$.}
    \label{fig:}
  \end{figure}

Consider the chain map
\[ D_n \co \CFK_{\F[[U,V]]}(K) \to \CFK_{\F[U, V]}(K) \otimes_{\F[U,V]} \F[[U,V, V^{-1}]] \]
where $D_n = \iota_V + V^n \circ \phi \circ \iota_U$.

Recall that given two chain complexes $(X, \d_X), (Y, \d_Y)$ and a chain map $f \co X \to Y$, the \emph{mapping cone} of $f$ is the chain complex $\Cone(f) = X \oplus Y$ with the differential $\d(x, y) = (\d_X x, f(x) + \d_Y y)$. (Note that we are working over a ring of characteristic two; otherwise, one needs to insert some appropriate minus signs in the definition of the mapping cone.)

\begin{exercise}
Let $f \co X \to Y$ be a chain map. Show that $(\Cone(f), \d)$ is a chain complex.
\end{exercise}

The surgery formula expresses $\HFm(S^3_n(K))$ in terms of the mapping cone of $D_n$.

\begin{theorem}[{\cite[Theorem 1.1]{MOlink}, cf. \cite[Theorem 1.1]{OSinteger}}] \label{thm:mappingcone}
We have the following isomorphism of $\F[W]$-modules
\[ \HFm(S^3_n(K)) \otimes_{\F[W]} \F[[W]] \cong H_*(\Cone (D_n)),\]
where on the left-hand side $\HFm(S^3_n(K)$ is viewed as a module over $W=U$ and on the right-hand side, $W=UV$.
\end{theorem}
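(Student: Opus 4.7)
The plan is to follow the integer surgery strategy of Ozsv\'ath--Szab\'o, adapted to the minus flavor with power series coefficients. The base case is Theorem \ref{thm:HFsurgery}: for $n \ge 2g(K)-1$ and $|s| \le \lfloor n/2 \rfloor$, we already know $H_*(A_s) \cong \HFm(S^3_n(K),[s])$ as modules over the single-variable polynomial ring. For arbitrary $n$, the idea is that the collection $\{A_s\}$ together with the $V$-localized pieces $\{B_s\}$ should globally assemble into a mapping cone computing $\HFm(S^3_n(K))$, with $D_n$ encoding the surgery framing through the $V^n$ shift.

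First I would construct a Heegaard triple $(\Sigma,\bfalpha,\bfbeta,\boldsymbol{\gamma},w,z)$ subordinate to the surgery: $(\Sigma,\bfalpha,\bfbeta,w,z)$ is a doubly pointed Heegaard diagram for $(S^3,K)$, and $\boldsymbol{\gamma}$ is obtained from $\bfbeta$ by handleslides together with a final replacement of a meridian of $K$ by the $n$-framed longitude, so that $(\Sigma,\bfalpha,\boldsymbol{\gamma},w)$ is a pointed Heegaard diagram for $S^3_n(K)$ and $(\Sigma,\bfbeta,\boldsymbol{\gamma},w,z)$ presents a standard knot in $\#^{g-1}(S^1 \times S^2)$. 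Counting holomorphic triangles in this multi-diagram, paired with a canonical top generator of $\CFm(\Sigma,\bfbeta,\boldsymbol{\gamma},w)$, produces a $\spinc$-graded chain map
\[ F \co \CFK_{\F[[U,V]]}(K) \longrightarrow \CFm_{\F[[W]]}(S^3_n(K)) \]
together with a partner map $G$ out of $\prod_s B_s$; their sum fits into a short exact sequence whose mapping cone is chain homotopy equivalent to $\CFm_{\F[[W]]}(S^3_n(K))$ by a standard argument using the surgery exact triangle \eqref{eq:surgeryexacttriangle}.

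Next I would identify $F$, after the Alexander-grading splitting, with $D_n$. The vertical piece $\iota_V \co A_s \to B_s$ arises from triangles with $n_z = 0$, capturing the cap into the handlebody containing $z$; the horizontal piece lands in $B_{s+n}$ because triangles with nonzero $z$-multiplicity wind around the surgery curve $n$ times, shifting the Alexander grading by exactly $n$. Passing between the $U$-localized and $V$-localized descriptions of these triangles via Remark \ref{rk:swapUV} introduces precisely the chain homotopy equivalence $\phi$, yielding the formula $V^n \circ \phi \circ \iota_U$. Combined with an induction on $n$ using the completed surgery exact triangle and Theorem \ref{thm:HFsurgery} as the base case, this establishes the desired quasi-isomorphism $\HFm(S^3_n(K)) \otimes_{\F[W]} \F[[W]] \cong H_*(\Cone(D_n))$.

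The main obstacle is controlling the homotopy equivalence $\phi$, which is only $\F[[W]]$-equivariant and not $\F[[U,V]]$-equivariant. One must realize $\phi$ by an honest chain map coming from triangle counts, verify that it preserves the Alexander grading, and check that its interaction with the $V^n$ twist is exactly what the framing prescribes. Secondary technical issues include establishing (strong) admissibility of the Heegaard multi-diagram required for working over $\F[[U,V]]$, and ensuring that the direct \emph{products} $\prod_s A_s$ and $\prod_s B_s$ behave as well-defined chain complexes under completion so that the resulting mapping cone is genuinely quasi-isomorphic to $\CFm_{\F[[W]]}(S^3_n(K))$.
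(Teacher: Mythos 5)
These notes do not actually prove Theorem \ref{thm:mappingcone}; it is imported from \cite{MOlink} and \cite{OSinteger}, and the only ingredients supplied here are the definitions of $A_s$, $B_s$, $\iota_U$, $\iota_V$, $\phi$, $D_n$, and the truncation statement of Exercise \ref{exer:truncation}. So your proposal has to be measured against the proofs in those references rather than against anything in the text.

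Measured that way, your sketch assembles the right ingredients (the large surgery Theorem \ref{thm:HFsurgery}, holomorphic triangle counts, a surgery exact sequence, completions), but the architecture is off, and the step you dismiss as ``a standard argument using the surgery exact triangle'' is exactly where the content of the theorem lives. The triangle \eqref{eq:surgeryexacttriangle} relates three fillings and, applied once, yields only a long exact sequence --- it does not by itself produce a $\spinc$-refined mapping-cone description of $\CFm(S^3_n(K))$ in terms of the knot complex, and there is no induction on $n$ in the actual argument. Nor does the proof construct a direct chain map $F\co \CFK_{\F[[U,V]]}(K)\to\CFm(S^3_n(K))$ by pairing with a top generator; that device belongs to the large surgery theorem, not to the integer surgery formula. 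What Ozsv\'ath--Szab\'o and Manolescu--Ozsv\'ath actually do is, for fixed $n$, compare $S^3_n(K)$ with a \emph{single} large surgery $S^3_{n+m}(K)$, $m\gg 0$, and with $m$ copies of $S^3$, via an iterated (refined) surgery exact sequence; the large surgery theorem then identifies the $(n+m)$-surgery term with $\bigoplus_s A_s$ and the copies of $\CFm(S^3)$ with the relevant $B_s$, and the connecting maps --- triangle-counting maps sorted according to the two extremal families of $\spinc$ structures on the $2$-handle cobordism --- are identified with $\iota_V$ and $V^n\circ\phi\circ\iota_U$. A truncation argument (Exercise \ref{exer:truncation}) and some homological algebra then convert this into the stated quasi-isomorphism. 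Your worry about $\phi$ is legitimate but secondary at the level of this statement: the homology of the cone does not depend on which homotopy equivalence one chooses, and in \cite{OSinteger} the horizontal map is simply \emph{defined} as a triangle count, with $V^n\phi\iota_U$ being the repackaging adopted in these notes. To make your outline into a proof you would need to (i) replace the single exact triangle by the iterated sequence comparing $n$- and $(n+m)$-surgery, (ii) actually prove the identification of its maps with $\iota_V$ and $V^n\phi\iota_U$, and (iii) supply the truncation and completion arguments; as written, the second step of your sketch asserts the conclusion rather than deriving it.
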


There is a similar formula for rational surgeries; see \cite{OSrational}. Note that 
\[ \iota_V|_{A_s} \co A_s \to B_s \qquad \textup{ and } \qquad V^n \circ \phi \circ \iota_U|_{A_s}  \co A_s \to B_{s+n}. \]
Hence 
\[ D_n|_{A_s} \co A_s \to B_s \oplus B_{s+n}, \]
and so $D_n$ may be depicted as follows:
\[
\begin{tikzpicture}[scale=1]
	\draw (-2.5,0) node [] (A-1) {$A_{s-n}$};
	\draw (0,0) node [] (A0) {$A_s$};
	\draw (2.5,0) node [] (A1) {$A_{s+n}$};

	\draw (0,-2) node [] (B0) {$B_s$};
	\draw (2.5,-2) node [] (B1) {$B_{s+n}$};

	\draw (-2.5,-1) node [] {$\dots$};	
	\draw (3.75,-1) node [] {$\dots$};

	\draw [->] (A-1) -- (B0) node [midway, right] {\tiny${V^n \phi \iota_U}$};
	\draw [->] (A0) -- (B0) node [midway, right] {\tiny${\iota_V}$};
	\draw [->] (A0) -- (B1) node [midway, right] {\tiny${V^n \phi \iota_U}$};
	\draw [->] (A1) -- (B1) node [midway, right] {\tiny${\iota_V}$};

\end{tikzpicture}
\]

\begin{exercise}\label{exer:truncation}
Use \eqref{eq:genus} to conclude that for $s \geq g(K)$, the maps
\[ \iota_V \co A_s \to B_s \qquad \textup{ and } \qquad V^n \circ \phi \circ \iota_U \co A_{-s} \to B_{-s+n} \]
are homotopy equivalences. Use this to conclude that $\Cone(D_1)$ is homotopy equivalent to
\[
\begin{tikzpicture}[scale=1]
	\draw (-2.5,0) node [] (A1-g) {$A_{1-g}$};
	\draw (0,0) node [] (A2-g) {$A_{2-g}$};
	\draw (0,-2) node [] (B2-g) {$B_{2-g}$};
	\draw (2.3,-1.8) node [] (B3-g) {$\; $};	
	
	\draw [->] (A1-g) -- (B2-g) node [midway, right] {\tiny${V \phi \iota_U}$};
	\draw [->] (A2-g) -- (B2-g) node [midway, right] {\tiny${\iota_V}$};
	\draw [->] (A2-g) -- (B3-g) node [midway, right] {};

	\draw (3.3,-1) node [] {$\dots$};

	\draw (5,0) node [] (Ag-2) {$A_{g-2}$};
	\draw (7.5,0) node [] (Ag-1) {$A_{g-1}$};
	\draw (7.5,-2) node [] (Bg-1) {$B_{g-1}$};
	\draw (5,-1.8) node [] (Bg-2) {$\; $};

	\draw [->] (Ag-2) -- (Bg-1) node [midway, right] {\tiny${V \phi \iota_U}$};
	\draw [->] (Ag-1) -- (Bg-1) node [midway, right] {\tiny${\iota_V}$};
	\draw [->] (Ag-2) -- (Bg-2) node [midway, right] {};

\end{tikzpicture}
\]
and that $\Cone(D_{-1})$ is homotopy equivalent to
\[
\begin{tikzpicture}[scale=1]
	\draw (-2.5,0) node [] (A1-g) {$A_{1-g}$};
	\draw (-0.2,-0.2) node [] (A2-g) {$\; $};
	\draw (-2.5,-2) node [] (B1-g) {$B_{1-g}$};
	\draw (-5,-2) node [] (B-g) {$B_{-g}$};	
	
	\draw [->] (A1-g) -- (B-g) node [midway, left] {\tiny${V^{-1} \phi \iota_U}$};
	\draw [->] (A1-g) -- (B1-g) node [midway, right] {\tiny${\iota_V}$};
	\draw [->] (A2-g) -- (B1-g) node [midway, right] {};

	\draw (0.8,-1) node [] {$\dots$};

	\draw (2.5,-0.2) node [] (Ag-2) {$\;$};
	\draw (5,0) node [] (Ag-1) {$A_{g-1}$};
	\draw (5,-2) node [] (Bg-1) {$B_{g-1}.$};
	\draw (2.5,-2) node [] (Bg-2) {$B_{g-2}$};

	\draw [->] (Ag-1) -- (Bg-2) node [midway, left] {\tiny${V^{-1} \phi \iota_U}$};
	\draw [->] (Ag-1) -- (Bg-1) node [midway, right] {\tiny${\iota_V}$};
	\draw [->] (Ag-2) -- (Bg-2) node [midway, right] {};

\end{tikzpicture}
\]
Similar statements hold for arbitrary positive (respectively negative) surgery coefficients.
\end{exercise}

\begin{remark}
We consider the knot Floer complex as a module over the bigraded ring $\F[U, V]$, whereas Ozsv\'ath-Szab\'o originally defined the knot Floer complex as a $\Z$-filtered chain complex over $\F[U]$. The two formulations are equivalent, as described in \cite[Section 1.5]{Zemkeabsgr}. Indeed, our notation $A_s$ and $B_s$ in this section lines up with the (minus flavor over the power series ring) of the complexes $A_s$ and $B_s$ in \cite{OSinteger}, while our $\iota_V$ (respectively $V^n \phi \iota_U$) is denoted $v$ (respectively $h$) in \cite{OSinteger}.
\end{remark}

\begin{remark}
Theorem \ref{thm:mappingcone} has analogous versions for the plus and hat flavors of Heegaard Floer homology. Both the plus and hat flavors have the advantage that one may pass to homology before taking the mapping cone. For the hat flavor, this is because the chain complexes are all vector spaces, while for the plus flavor, this is because the maps $\iota_{V, *}$ and $(V^n \circ \phi \circ \iota_U)_*$ are surjective for knots in $S^3$. See \cite[Section 5]{OSinteger} for some sample calculations. 
\end{remark}

\begin{exercise}
Compute $\HFm(S^3_{+1}(T_{3,4}))$ using Exercise \ref{exer:truncation} and the computation from Exercise \ref{exer:+5T34}.
\end{exercise}

\subsection{Applications}
We conclude with some applications of the mapping cone formula.

Recall the \emph{Cosmetic Surgery Conjecture}, which posits that for a nontrivial knot $K \subset S^3$ and $r, r' \in \Q$, if $S^3_r(K)$ and $S^3_{r'}(K)$ are homeomorphic as oriented manifolds, then $r=r'$. Using the mapping cone formula, Ni-Wu \cite{NiWu} prove the following:

\begin{theorem}[{\cite[Theorem 1.2]{NiWu}}]
Suppose $K$ is a nontrivial knot in $S^3$ such that $S^3_r(K) \cong S^3_{r'}(K)$ as oriented manifolds where $r, r'$ are distinct rational numbers. Then $r = -r'$ and $r$ is of the form $p/q$ where $p,q$ are relatively prime integers with $q^2 \equiv -1 \pmod p$.
\end{theorem}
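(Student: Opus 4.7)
The plan is to compare Heegaard Floer--theoretic invariants of $S^3_r(K)$ and $S^3_{r'}(K)$ under the hypothesized orientation-preserving homeomorphism. Writing $r=p/q$ and $r'=p'/q'$ in lowest terms with $p,p'>0$, the identity $|H_1(S^3_{p/q}(K);\Z)|=p$ forces $p=p'$ immediately; denote the common value by $p$. The homeomorphism then induces a bijection $\sigma$ on $\spinc\cong\Z/p\Z$ under which all oriented invariants must match. The two families of invariants I would exploit are the $d$-invariants and the Casson--Walker invariant (which Heegaard Floer recovers as an Euler characteristic of the reduced plus flavor), both rendered computable from $\CFKR(K)$ via the mapping cone formula (Theorem~\ref{thm:mappingcone}).

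For each $s\in\Z$, introduce the non-negative integer $V_s$ equal to the minimum $n\ge 0$ such that $U^n$ times the top of the free tower of $H_*(A_s)$ is carried by $\iota_V$ to the bottom of the free tower of $H_*(B_s)\cong\F[[U]]$; by Remark~\ref{rk:swapUV} the analogous invariant extracted from $V^n\phi\iota_U$ is $V_{-s}$. Computing the bottom of the free tower of $H_*(\Cone(D_n))$ using the truncated cone of Exercise~\ref{exer:truncation} should yield a formula of the shape
\[ d(S^3_{p/q}(K),i)=d(L(p,q),i)-2\max\!\bigl(V_{s_1(i)},V_{s_2(i)}\bigr) \]
for explicit affine functions $s_1,s_2$ of $i$ depending on $p,q$. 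Summing the relation $d(S^3_{p/q}(K),i)=d(S^3_{p/q'}(K),\sigma(i))$ over $i$ and using that the totals $\sum_s V_s$ depend only on $K$, one deduces $\sum_i d(L(p,q),i)=\sum_i d(L(p,q'),i)$; the lens-space $d$-invariant recursion then forces $L(p,q)\cong L(p,q')$ as oriented manifolds, so $q'\equiv \pm q^{\pm 1}\pmod p$.

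To rule out the cases $q'\equiv q^{\pm 1}\pmod p$ (which correspond to $r=r'$), I would invoke the Casson--Walker invariant, whose surgery formula has a knot contribution proportional to $(q/p)\,\Delta_K''(1)$. When $\Delta_K''(1)\ne 0$, matching Casson--Walker together with $L(p,q)\cong L(p,q')$ forces $q\equiv q'\pmod p$, contradicting $r\ne r'$, so $r=-r'$. The case $\Delta_K''(1)=0$ is handled by replacing Casson--Walker with a finer quantity built from $\{V_s\}$ (a weighted sum such as $\sum_s (2s+1)V_s$, or a Casson--Gordon-type refinement), whose non-vanishing is guaranteed by the nontriviality of $K$ via the genus detection~\eqref{eq:genus}. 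Once $r=-r'$ is in hand, so $q'\equiv -q\pmod p$ up to the lens-space ambiguity, compatibility with $q'\equiv q^{-1}\pmod p$ from the lens-space matching yields $qq'\equiv -1$, hence $q^2\equiv -1\pmod p$.

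The main obstacle is controlling the induced permutation $\sigma$ of $\spinc\cong\Z/p\Z$: it is a priori arbitrary, so the $d$-invariant matching must be read as an identity of multisets rather than pointwise. Extracting an arithmetic condition from this multiset identity requires combining $\sigma$-invariant sums of $d$-invariants (which collapse the $\sigma$-dependence) with the finer Casson--Walker data, and the nontriviality of $K$ enters precisely to guarantee that the knot contribution to these invariants is non-degenerate, so that the lens-space contribution can be isolated and the congruence $q^2\equiv -1\pmod p$ extracted cleanly.
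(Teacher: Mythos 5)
These lecture notes do not prove this theorem --- it is stated as an application of the mapping cone formula and attributed to \cite{NiWu} --- so your proposal can only be measured against Ni--Wu's actual argument. Your toolkit is the right one (the $d$-invariant formula for rational surgeries in terms of the non-negative integers $V_s$ and $H_s=V_{-s}$, the Casson--Walker invariant, and a comparison with lens spaces), but two load-bearing steps fail as written. First, after summing $d(S^3_{p/q}(K),i)=d(S^3_{p/q'}(K),\sigma(i))$ over $i$ you claim the correction terms cancel because ``the totals $\sum_s V_s$ depend only on $K$.'' They do not cancel: in $\sum_{i=0}^{p-1}\max\bigl(V_{s_1(i)},V_{s_2(i)}\bigr)$ the index $s_1(i)=\lfloor i/q\rfloor$ takes each value roughly $|q|$ times, so the total is a $q$-weighted sum, not a knot-only quantity. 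This is not a removable technicality: the $q$-dependence of precisely this sum is the mechanism by which Ni--Wu show that $q$ and $q'$ must have opposite signs, and in their proof the equality $\sum_i d(L(p,q),i)=\sum_i d(L(p,q'),i)$ is obtained in the opposite logical direction --- Boyer--Lines' Casson--Walker argument forces $\Delta_K''(1)=0$, whence $\lambda(S^3_{p/q}(K))=\lambda(L(p,q))$ and the Dedekind sums $s(q,p)$, $s(q',p)$ agree. Second, even granting that equality, ``equal totals of lens-space $d$-invariants implies $L(p,q)\cong L(p,q')$'' is false: that total is essentially the Dedekind sum $s(q,p)$, a single rational number, and its equality does not in general force $q'\equiv q^{\pm1}\pmod p$.

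Your fallback for the case $\Delta_K''(1)=0$ is also unrecoverable: no quantity built from $\{V_s\}$ has non-vanishing guaranteed by nontriviality of $K$, because the figure-eight knot has exactly the same sequence $\{V_s\}$ as the unknot ($V_s=0$ for all $s\ge 0$, and $V_{-s}=H_s$ is then forced for $s<0$); genus detection by $\HFKhat$ does not feed into the $V_s$. Ni--Wu need no such non-vanishing statement. Their actual route is: (i) Casson--Walker forces $\Delta_K''(1)=0$ and hence $\lambda(L(p,q))=\lambda(L(p,q'))$; (ii) an identity relating $\lambda$, $\chi(\HFred)$ and $\sum_i d$ converts the homeomorphism into an equality of the two $q$-weighted correction sums, whose signs and $q$-linearity force $qq'<0$ and then $q'=-q$; (iii) the residual lens-space/linking-form constraint $-q\equiv q^{\pm1}\pmod p$ yields $q^2\equiv-1\pmod p$. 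Your final arithmetic step (combining $q'=-q$ with $q'\equiv q^{-1}\pmod p$) is the correct punchline, but everything feeding into it needs to be rebuilt along these lines; note also that even $q'\equiv q\pmod p$ would not literally give $r=r'$, since $p/q\neq p/(q+p)$.
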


\begin{remark}
Hanselman \cite[Theorem 2]{Hanselmancosmetic} recently improved this result to $r= \pm 2$ or $\pm 1/q$, using bordered Floer homology, a version of Heegaard Floer homology for 3-manifolds with parametrized boundary. Hanselman's result relies on a reinterpretation of bordered Floer homology as immersed curves \cite{HanselmanRasmussenWatson}.
\end{remark}

Heegaard Floer homology can also be used to obstruct manifolds from being obtained by surgery on a knot in $S^3$. Lickorish \cite{Lickorish} and Wallace \cite{Wallace} proved that every closed oriented 3-manifolds can be obtained by surgery on a link in $S^3$. However, since $H_1(S^3_{p/q}(K)) \cong \Z/p\Z$, it follows that if $H_1(Y; \Z)$ is not cyclic,  then $Y$ cannot be obtained by surgery on a knot in $S^3$. For example, $H_1(\R P^3 \# \R P^3; \Z) \cong \Z/2\Z \oplus \Z/2\Z$, hence $\R P^3 \# \R P^3$ is not surgery on any knot in $S^3$.

This $H_1$ obstruction vanishes for integer homology spheres. Auckly \cite{Auckly} gave an example of a hyperbolic integer homology sphere that cannot be obtained via surgery on a knot in $S^3$; moreover, his techniques can be used to give infinitely many such examples. Using the mapping cone formula, Hom-Karakurt-Lidman \cite{HomKarakurtLidman} prove the following:

\begin{theorem}[{\cite[Theorem 1.1]{HomKarakurtLidman}}]\label{thm:HKL}
The Brieskorn homology spheres $\Sigma(n, 2n-1, 2n+1)$, $n \geq 8$, $n$ even, cannot be obtained by surgery on a knot in $S^3$.
\end{theorem}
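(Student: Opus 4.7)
The plan is to argue by contradiction: suppose $Y_n := \Sigma(n,2n-1,2n+1) = S^3_r(K)$ for some knot $K \subset S^3$ and some $r \in \Q$. Since $Y_n$ is an integer homology sphere, we must have $r = 1/q$ for some nonzero integer $q$. I will then compute two invariants of $Y_n$, the $d$-invariant $d(Y_n)$ and (some aspect of) $\HFred(Y_n)$, independently of the presumed surgery description, and then show that the mapping cone formula forces an inequality relating these invariants that $Y_n$ violates for $n \geq 8$ even.

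First I would compute $\HFm(Y_n)$ directly. Since $\Sigma(n,2n-1,2n+1)$ is Seifert fibered, it has a negative-definite plumbing description, so the Ozsv\'ath--Szab\'o plumbing formula (or equivalently the graded root combinatorics of N\'emethi) computes $\HFm(-Y_n)$, and then $\HFm(Y_n)$ is obtained via the $d(-Y,\mfs) = -d(Y,\mfs)$ symmetry. From this I would extract the exact value of $d(Y_n)$ and, more importantly, information about the $\F[U]$-module structure of $\HFred(Y_n)$ — in particular a lower bound on how far down in Maslov grading the nontrivial tower summand $\F[U]_{(d(Y_n))}$ extends before non-$U$-torsion elements of $\HFred(Y_n)$ first appear above it. Both quantities are explicit functions of $n$ that grow without bound.

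Second I would use the mapping cone formula (Theorem~4.50 and its rational analog from \cite{OSrational}) to describe $\HFm(S^3_{1/q}(K)) \otimes \F[[W]]$ as $H_*(\Cone(D_{1/q}))$. The key structural consequence is that the $B_s$-summands in the mapping cone are each isomorphic to a single $\F[[W]]$-tower, and the only source of $U$-torsion in $H_*(\Cone(D_{1/q}))$ comes from the $A_s$-summands via the maps $\iota_V$ and $V^{1/q}\phi\iota_U$. Combined with \eqref{eq:genus} (the truncation from Exercise~\ref{exer:truncation}), this gives a precise inequality relating $d(S^3_{1/q}(K))$, the rank and gradings of $\HFred(S^3_{1/q}(K))$, and the Ozsv\'ath--Szab\'o $V_0$-invariant of $K$ (or $mK$, depending on the sign of $q$). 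Schematically, once $d < 0$ the nontrivial tower must pair with a torsion element coming from some $A_s$ within a bounded distance from the bottom, controlling how the torsion grading distribution can look.

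Third, and this is where the main work lies, I would show that the invariants computed in the first step violate this inequality for every integer $n \geq 8$ with $n$ even, regardless of $q$ and $K$. The crux is that $d(Y_n)$ grows in magnitude with $n$ while the ``depth'' at which torsion in $\HFred(Y_n)$ appears grows too slowly to support a mapping cone presentation; for small $n$ the numerics just barely permit a consistent $(V_s)_{s \geq 0}$ sequence, but for $n \geq 8$ even no such sequence exists. The main obstacle is precisely this numerical step: turning the structural description of $\Cone(D_{1/q})$ into a clean inequality and then verifying that the plumbing computation outputs for $Y_n$ lie outside its admissible region uniformly in $n \geq 8$ and $q \in \Z \setminus \{0\}$. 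The parity hypothesis ($n$ even) enters here, as it controls the symmetry/grading shifts in the plumbing computation that make the obstruction sharp.
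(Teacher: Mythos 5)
Your outline follows essentially the same route as the paper's source for this result: the notes do not prove the theorem but cite \cite{HomKarakurtLidman}, whose argument is precisely a surgery obstruction relating $d(S^3_{1/q}(K))$ to the $\F[U]$-module structure of $\HFred$, extracted from the mapping cone formula, combined with a plumbing/graded-roots computation of $\HFm(\Sigma(n,2n-1,2n+1))$ and a check that the two are incompatible for even $n \ge 8$. The only caveat is that your sketch explicitly defers the decisive step --- pinning down the precise obstruction and verifying the violation uniformly in $n$ and $q$ --- so as written it is a correct strategy matching the cited proof rather than a complete argument.
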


The proof of Theorem \ref{thm:HKL} relies on a surgery obstruction which relates the $d$-invariant of surgery on a knot in $S^3$ with $\HFred$; see \cite[Theorem 1.2]{HomKarakurtLidman} for the precise statement.

The mapping cone formula has also played a role in results that make no reference to Dehn surgery. For example, one could ask whether there exist simply-connected, positive definite symplectic 4-manifolds. Recall that a manifold is \emph{geometrically simply-connected} if it admits a handle decomposition with no 1-handles.

\begin{theorem}[{\cite[Theorem 1.1]{HomLidmanposdef}}]\label{thm:HLposdef}
Let $X$ be a closed, geometrically simply-connected 4-manifold with $b^+_2(X) \geq 2$. If the intersection form of $X$ is positive definite, then $X$ is not symplectic.
\end{theorem}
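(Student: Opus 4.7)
The plan is to argue by contradiction using the closed 4-manifold Heegaard Floer mixed invariant. Suppose $X$ were symplectic. By Taubes' theorem identifying Seiberg--Witten and Gromov invariants, the canonical $\spinc$ structure $\mfs_K$ on a closed symplectic 4-manifold with $b^+_2 \geq 2$ has nonzero Seiberg--Witten invariant; under the Ozsv\'ath--Szab\'o identification of Seiberg--Witten theory with Heegaard Floer homology, this produces a nonvanishing closed 4-manifold Heegaard Floer mixed invariant $\Phi_{X, \mfs_K} \neq 0$. The aim is to use positive-definite handle-theoretic rigidity to contradict this.

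Use the hypothesis of geometric simple connectivity to write $X = X_1 \cup_Y X_2$, where $X_1$ consists of the $0$- and $2$-handles and $X_2$ consists of the $3$- and $4$-handles (there are no $1$-handles at all). Dualizing $X_2$ converts each $3$-handle into a $1$-handle attached to $B^4$, so $X_2 \cong \natural^{h_3}(S^1 \times B^3)$ and the splitting manifold is $Y \cong \#^{h_3}(S^1 \times S^2)$. The piece $X_1$ is $B^4$ with $2$-handles attached along a framed link $L \subset S^3$, whose linking matrix $\Lambda$ presents $H_1(Y) \cong \Z^{h_3}$; since the intersection form of $X$ (which is positive definite) is the quotient of $\Lambda$ by its $h_3$-dimensional kernel, $\Lambda$ is positive semidefinite. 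In particular $b^+_2(X_1) = b_2(X) \geq 2$ while $b^+_2(X_2) = 0$.

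The induced cobordism map $F^-_{X_1, \mfs} \co \HFm(S^3) \to \HFm(Y)$ is computable iteratively from the mapping cone formula (Theorem \ref{thm:mappingcone}) applied to each $2$-handle attachment along $L$, while $F^-_{X_2, \mfs'} \co \HFm(Y) \to \HFm(S^3)$ has an explicit form since $X_2$ is a $4$-dimensional $1$-handlebody. Combined with the known structure of $\HFm(\#^{h_3}(S^1 \times S^2))$ and the vanishing of its $d$-invariants, these computations should force $\Phi_{X, \mfs_K}$ to vanish, yielding the desired contradiction. The principal obstacle is that the evident cut along $Y$ is \emph{not} admissible for the mixed invariant (which requires $b^+_2 \geq 1$ on both sides, yet $b^+_2(X_2) = 0$); one must either refine the cut inside $X_1$ by splitting its positive-definite part into two cobordisms each with $b^+_2 \geq 1$, carefully tracking the restriction of $\mfs_K$, or invoke the $d$-invariant inequality for positive (semi)definite cobordisms to force $F^-_{X_1}$ into gradings incompatible with a nonzero pairing against the highly constrained action of $X_2$ on $\HFm(Y)$. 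Making this grading and $U$-torsion incompatibility precise--forcing the positive-definite rigidity to collide with the canonical-class nonvanishing guaranteed by symplecticness--is the heart of the argument.
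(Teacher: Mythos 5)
These notes do not actually prove Theorem \ref{thm:HLposdef}; they only record the statement and remark that the proof uses the Ozsv\'ath--Szab\'o closed 4-manifold invariant, computed via cobordism maps and the mapping cone formula. Your plan is consistent with that strategy, and you correctly identify the central difficulty (the obvious cut along $Y \cong \#^{h_3}(S^1\times S^2)$ is inadmissible for the mixed invariant because $b_2^+(X_2)=0$). But as a proof it has a genuine gap: the vanishing half of the argument, which you yourself describe as ``the heart of the argument,'' is never carried out. You list two possible remedies --- refining the cut inside the positive-definite 2-handle cobordism, or a $d$-invariant/grading incompatibility --- but execute neither, and the claim that the computations ``should force $\Phi_{X,\mfs_K}$ to vanish'' is exactly what needs to be proved. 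The argument of \cite{HomLidmanposdef} does take the first route: since all of $b_2^+(X)\geq 2$ is carried by the 2-handles, the link $L$ can be split so that each of the two resulting 2-handle cobordisms has $b_2^+\geq 1$, making the cut admissible; one then computes the relevant maps via the surgery formula and uses the positive semidefiniteness of $\Lambda$, the structure of $\HFm(\#^{k}(S^1\times S^2))$, and the explicit form of the 3-handle maps to show the composition defining $\Phi_{X,\mfs}$ vanishes for every $\mfs$. Without that computation your argument shows only that a contradiction is plausible.

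A second, fixable, error: your nonvanishing input is misattributed. Taubes' theorem concerns the Seiberg--Witten invariant, and there is no established identification of the Seiberg--Witten and Heegaard Floer closed 4-manifold invariants that you may invoke. The correct input is Ozsv\'ath and Szab\'o's theorem that the Heegaard Floer mixed invariant of a closed symplectic 4-manifold with $b_2^+\geq 2$ is nonzero in the canonical $\spinc$ structure, proved directly within Heegaard Floer theory using Donaldson's Lefschetz pencils rather than via gauge theory.
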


\begin{remark}
Yasui \cite{Yasui} has recently obtained another proof of Theorem \ref{thm:HLposdef}, using Seiberg-Witten theory.
\end{remark}

The proof of Theorem \ref{thm:HLposdef} relies on the Ozsv\'ath-Szab\'o closed 4-manifolds invariant \cite{OSsmooth4}. Their invariant in defined as a composition of certain cobordism maps, which, in certain special cases, may be computed using the mapping cone formula.

%
%
%
%
%
%
%

\bibspread

\bibliography{bib}

\end{document}